\documentclass[12pt,reqno]{amsart}

\usepackage{amsbsy,amsfonts,amsmath,amssymb,amscd,amsthm,mathrsfs,verbatim,calc}
\usepackage{graphicx,epsfig,color}
\usepackage{pgf,tikz,pgfplots}
\usepackage{enumitem}
\usepackage[a4paper,text={6.1in,8in},centering,includefoot,foot=1in]{geometry}
\usepackage[colorlinks=true,hypertexnames=false,linkcolor=green,citecolor=green]{hyperref}

\small\normalsize
\theoremstyle{plain}

\newtheorem{prelem}{{\bf Proposition}}

\newtheorem{theorem}{Theorem}
\newtheorem{corollary}[theorem]{Corollary}
\newtheorem{lemma}[theorem]{Lemma}
\newtheorem{remarks}[theorem]{Remarks}

\newtheorem{proposition}[theorem]{Proposition}

\numberwithin{equation}{section}
\let\oldmarginpar\marginpar
\renewcommand\marginpar[1]{\-\oldmarginpar[\raggedleft\footnotesize \textcolor{red}{#1}]{\raggedright\footnotesize\textcolor{red}{#1}}}

\begin{document}

\title[Total Roman bondage number of a graph]{Total Roman bondage number of a graph}
\author[]{  F. Khosh-Ahang Ghasr     and S. Nazari-Moghaddam\\\vspace{2mm}\\Department of Mathematics, Faculty of Science, \\ Ilam University,
Ilam, Iran.\\\texttt{ f.khoshahang@ilam.ac.ir, sakine.nazari.m@gmail.com}\vspace{4mm}
  }

\begin{abstract}
A \emph{total Roman dominating function} (TRDF) on a graph $G$ with no isolated vertices is a function $f:V(G)\to\{0,1,2\}$ such that every vertex $v$ with $f(v)=0$ has a neighbor assigned $2$, and the subgraph induced by $\{v:f(v)>0\}$ has no isolated vertices. The \emph{total Roman domination number} $\gamma_{tR}(G)$ is the minimum weight of a TRDF on $G$. The \emph{total Roman bondage number} $b_{tR}(G)$ is the minimum cardinality of an edge set $E'\subseteq E(G)$ such that $G-E'$ has no isolated vertices and $\gamma_{tR}(G-E')>\gamma_{tR}(G)$; if no such $E'$ exists, $b_{tR}(G)=\infty$.

We prove that deciding whether $b_{tR}(G)\leq k$ is NP-complete for arbitrary graphs. We establish sharp bounds, including $\gamma_{tR}(G)+1\leq \gamma_{tR}(G-B)\leq \gamma_{tR}(G)+2$ for any $b_{tR}(G)$-set $B$ (both sharp), and $b_{tR}(G)\geq \max\{\delta(G),b(G)\}$ when $\gamma_{tR}(G)=3\beta(G)$. We characterize graphs with $b_{tR}(G)=\infty$ and provide a necessary and sufficient condition for $b_{tR}(G)=1$. Exact values are determined for complete graphs, complete bipartite graphs, brooms, double brooms, wheels and wounded spiders. 

Further upper bounds are given in terms of order, diameter, girth, and structural features.
 \newline
\newline
\noindent \textbf{Keywords:} total Roman domination number, total Roman bondage number.
\newline
\textbf{MSC 2020}: 05C69
\end{abstract}


\maketitle

\setcounter{tocdepth}{1}


\section{Introduction}
Throughout this paper, $G=(V,E)$ is a simple graph with no isolated vertices. Its order $|V|$ and size $|E|$ are denoted by $n$ and $m$ respectively. For a vertex $v\in V$, the \textit{open neighborhood} $N(v)$ is $\{u\in V:uv\in E\}$, the \textit{closed neighborhood} $N[v]=N(v)\cup\{v\}$, and the \textit{degree} $\deg(v)=|N(v)|$. These extend naturally to subsets $S\subseteq V$. A vertex of degree $1$ is a \textit{leaf}.  A vertex which is adjacent to a leaf is called a \textit{support vertex}.  It is called a \textit{strong support vertex} if $|L(v)|\geq 2$ where $L(v)$ is the set of leaves adjacent to $v$. The minimum and maximum degrees are $\delta=\delta(G)$ and $\Delta=\Delta(G)$. For $S\subseteq V$, the \textit{$S$-external private neighborhood} of $v\in S$ is $\mathrm{epn}(v,S)=\{w\in V\setminus S:N(w)\cap S=\{v\}\}$. The \textit{distance} $d(u,v)$ is the length of a shortest $u$-$v$ path in a connected graph, and the \textit{diameter} $\mathrm{diam}(G)$ is the maximum distance. The \textit{girth} $\mathrm{girth}(G)$ is the length of a shortest cycle. 

A \textit{(total) dominating set} $S$ satisfies $N[S]=V$ (resp.\ $N(S)=V$), with \textit{(total) domination number} $\gamma(G)$ (resp.\ $\gamma_t(G)$) its minimum size. The \textit{bondage number} $b(G)$ is the minimum $|E'|$ such that $\gamma(G-E')>\gamma(G)$, introduced in \cite{refrefref}. Kulli and Patwari \cite{kul} defined the \textit{total bondage number} $b_t(G)$ similarly for $\gamma_t$.

A \textit{Roman dominating function} (RDF) $f:V\to\{0,1,2\}$ requires every $v$ with $f(v)=0$ adjacent to some $u$ with $f(u)=2$ \cite{rr,s}. The \textit{Roman domination number} $\gamma_R(G)$ is the minimum weight $\omega(f)=|V_1|+2|V_2|$, where $(V_0,V_1,V_2)$ partitions $V$ by $f$-values. Roman domination is widely studied (e.g., \cite{R1,Co,R2}). The \textit{Roman bondage number} $b_R(G)$ is the minimum $|E'|$ with $\gamma_R(G-E')>\gamma_R(G)$ \cite{rad}.

A \textit{total Roman dominating function} (TRDF) is an RDF where $G[\{v:f(v)>0\}]$ has no isolated vertices. A \textit{quasi-total Roman dominating function} (QTRDF) further requires no isolated $v$ with $f(v)=2$. The \textit{total} (resp.\ \textit{quasi-total}) \textit{Roman domination number} $\gamma_{tR}(G)$ (resp.\ $\gamma_{qtR}(G)$) is the minimum TRDF (resp.\ QTRDF) weight \cite{GG,LC}, extensively investigated (e.g., \cite{ah1,MV,Mar}). The \textit{quasi-total Roman bondage number} $b_{qtR}(G)$ is defined analogously \cite{J}.

 In this paper we study the \textit{total Roman bondage number} $b_{tR}(G)$ which is the minimum cardinality of a subset $E'$ of $E$ such that $G-E'$ has no isolated vertices and \linebreak $\gamma_{tR}(G-E')>\gamma_{tR}(G)$. If no such $E'$ exists, then set $b_{tR}(G)=\infty$. 

After preliminaries in  Section~2, in Section~3 we  prove that the decision problem for $b_{tR}(G)$ is NP-hard via reduction from 3-SAT. In Section 4 we present properties, bounds and exact values for total Roman bondage number for a few classes of graphs.

\section{Preliminaries}
We start by recalling some special classes of graphs needed in the remainder of the paper. We denote the \textit{complete graph}, \textit{path}, and \textit{cycle} of order $n$ by $K_n$, $P_n$, and $C_n$, respectively. If $C$ is a cycle with vertices $v_1,\dots,v_n$ and edges $v_1v_n$, $v_iv_{i+1}$ for $i=1,\dots,n-1$, we sometimes write $C=(v_1,\dots,v_n)$. The \textit{wheel graph} $W_{n+1}$ is obtained by joining a new vertex to all vertices of $C_n$. We write $K_{p,q}$ for the \textit{complete bipartite graph} with bipartition $(X,Y)$ where $|X|=p$ and $|Y|=q$. A complete subgraph of $G$ is a \textit{clique}. The \textit{corona} of a graph $H$, denoted $\mathrm{cor}(H)$, is obtained by adding a pendant edge to each vertex of $H$.

The \textit{centers} of a graph are the vertices of minimum eccentricity. A tree with exactly one center and $t\geq 2$ leaves is a \textit{star} $S_t$. A tree of diameter three with exactly two centers, one adjacent to $r$ leaves and the other to $s$ leaves, is a \textit{bistar} $S_{r,s}$. For integers $t\geq 2$ and $k\geq 1$, a \textit{wounded spider} $S(k,t)$ is a star $S_t$ with $t-k$ edges subdivided once. A \textit{healthy spider} $S(0,t)$ is a star with all edges subdivided (also called a \textit{subdivided star}). In the wounded or healthy spider, the degree-$t$ vertex is the \textit{head}, vertices at distance two from the head are \textit{feet}, and those at distance one (if any) are \textit{wounded feet}. A \textit{broom} $B(t,d)$ ($t\geq 3$, $d\geq 2$) has $t+d$ vertices: an induced path $x_1,\dots,x_t$ with $d$ pendants adjacent to $x_t$ (or sometimes $x_1$; we specify if needed). A \textit{double broom} $B(t,d,d')$ ($t\geq 3$, $d,d'\geq 2$) has path $x_1,\dots,x_t$ with $d$ pendants at $x_1$ and $d'$ at $x_t$.

The \textit{vertex cover number} $\beta(G)$ is the minimum number of vertices incident to all edges.

\begin{prelem}\label{AA}
(\cite[Theorems~4, 5, 6 and Proposition~7]{ah1}, \cite[Relation~(1)]{GG}, \cite[Proposition~1]{Co}) Let $G$ be a graph with no isolated vertex and let $H$ be any graph. Then
\begin{enumerate}
\item $\gamma(G)\leq  \beta (G)$.
\item $\gamma (G)\leq \gamma _t(G)$.
\item $\gamma _R(G)\leq \gamma_{qtR}(G) \leq \gamma_{tR}(G)$.
\item $\gamma _t (G)\leq \gamma _{tR}(G)\leq 2\gamma _t (G)$.
\item $2\gamma (G)\leq \gamma _{tR}(G)\leq 3\gamma (G)$.
\item $\gamma (H)\leq \gamma _{R}(H)\leq 2\gamma  (H)$.
\item For connected graph $G$ of order $n\geq 3$, $\gamma _{tR}(G)=\gamma _t(G)+1$ if and only if $\Delta (G)=n-1$.
\item $\gamma_{tR}(G)=\gamma_t(G)$ if and only if $G$ is the disjoint union of copies of $K_2$.
\end{enumerate}
\end{prelem}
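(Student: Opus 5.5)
This proposition collects known facts, all cited, so the plan is to verify each item directly from the definitions. Where the argument is longer, I rely on the cited sources.

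\emph{Items (1)--(4) and (6).} For (1), take a minimum vertex cover $C$ in a graph with no isolated vertices. Every vertex outside $C$ has an incident edge, and since $C$ covers that edge, its other endpoint lies in $C$. So $C$ is dominating. Item (2) holds because every total dominating set is dominating. For (3), every TRDF is a QTRDF and every QTRDF is an RDF; minimizing over the nested families reverses the inclusions. For (4), start from a TRDF $f$. The set $V_1\cup V_2$ is total dominating: vertices in $V_0$ have a neighbour in $V_2$, and vertices of positive weight have a positive neighbour. Hence $\gamma_t\le |V_1|+|V_2|\le \omega(f)$. Conversely, assigning $2$ to every vertex of a minimum total dominating set gives a TRDF of weight $2\gamma_t$. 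For (6), an RDF of minimum weight has $V_1\cup V_2$ dominating, so $\gamma\le\gamma_R$. Assigning $2$ to a minimum dominating set gives $\gamma_R\le 2\gamma$.

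\emph{Item (5).} For the upper bound, take a minimum dominating set $D$ and assign $2$ to each vertex of $D$. Then, for each vertex of $D$ that is isolated in $G[D]$, give value $1$ to one of its neighbours outside $D$, which exists since $G$ has no isolated vertices. The positive-weight subgraph then has no isolated vertices, and the total weight is at most $3|D|$.

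For the lower bound $2\gamma\le\gamma_{tR}$, take a minimum TRDF $f$. The set $V_2\cup V_1$ dominates, but this only gives $\gamma\le |V_1|+|V_2|$, which is not enough. Instead I would build the dominating set $V_2$ together with one representative for each $V_1$-vertex not dominated by $V_2$, and count weights more carefully. Each $V_1$-vertex has a positive neighbour, so $V_1$-vertices can be paired to save. This charging argument is the delicate point. I would instead cite \cite{ah1} for it.

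\emph{Items (7) and (8).} For (7): if $\Delta=n-1$ with universal vertex $v$, then $\gamma_t=2$. The function assigning $2$ to $v$ and $1$ to one neighbour is a TRDF of weight $3$, and weight $2$ is impossible since $n\ge3$. So $\gamma_{tR}=3=\gamma_t+1$.

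The converse is the main obstacle. Suppose $\gamma_{tR}=\gamma_t+1$ and let $f$ be a minimum TRDF. By (4), $|V_1|+|V_2|\ge\gamma_t$, so $|V_2|\le 1$. If $V_2=\emptyset$, then $V_0=\emptyset$ and $\gamma_{tR}=n$. If instead $V_2=\{v\}$, then $v$ dominates all of $V_0$, so $V_1$ is small relative to $\gamma_t$. One then argues that $V_1=\{u\}$, forcing $v$ to be universal. Otherwise a smaller TRDF, or a smaller total dominating set, can be found. I would cite \cite{ah1} for the details.

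For (8), equality in (4) forces $V_2=\emptyset$, hence $V_0=\emptyset$, so $\gamma_t=n$. This happens only when every vertex is needed in a total dominating set, which gives a disjoint union of $K_2$'s. The converse is immediate.
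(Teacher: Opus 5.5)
\textbf{Assessment.} The paper gives no proof of this proposition; all eight items are imported from the cited sources. Your direct checks are correct: (1)--(4), (6), the upper bound in (5), the forward direction of (7), and the reduction in (8) to $V_2=V_0=\emptyset$, $\gamma_t=n$. They go beyond what the paper does. As a self-contained proof, though, the proposal stops exactly where the content is nontrivial, and one branch of (7) is left unfinished.

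\textbf{The converse of (7).} From $V_2=\emptyset$ you only get $\gamma_t=n-1$, and you never show this forces $\Delta=n-1$. It does: if a spanning tree $T$ is not a star, remove the two ends $x_0,x_d$ of a diametral path ($d\ge 3$). What remains is still total dominating, so $\gamma_t\le n-2$.

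The branch $V_2=\{v\}$ is the whole content of the converse, and ``one then argues that $V_1=\{u\}$'' is not an argument. Here is one way to close it.
\begin{itemize}
\item $S=V_1\cup\{v\}$ is a $\gamma_t$-set and $V_0\subseteq N(v)$.
\item For each $w\in V_1$, the set $S\setminus\{w\}$ is too small to be total dominating, so some $y_w$ has $N(y_w)\cap S=\{w\}$. Necessarily $y_w\in S\setminus N(v)$.
\item The map $w\mapsto y_w$ is injective, which gives $|V_1\cap N(v)|\le 1$. Totality at $v$ then gives $V_1\cap N(v)=\{u\}$.
\item Now $w\mapsto y_w$ is a bijection onto $S\setminus\{u\}$. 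Hence every vertex of $S$ has exactly one neighbour in $S$, so $G[S]$ is a perfect matching containing $uv$.
\item If $|V_1|\ge 2$, there is another matched pair $\{a,b\}$ outside $N(v)$. By connectivity, say $a$ has a neighbour $c\in V_0\subseteq N(v)$.
\item Then $(S\setminus\{u,b\})\cup\{c\}$ is a total dominating set of size $\gamma_t-1$, a contradiction.
\end{itemize}

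\textbf{The lower bound in (5).} This is not delicate. Let $f=(V_0,V_1,V_2)$ be a $\gamma_{tR}$-function.
\begin{itemize}
\item Every vertex of $V_1$ with no neighbour in $V_2$ is non-isolated in $G[V_1]$.
\item By Ore's theorem, the subgraph of $G[V_1]$ induced by its non-isolated vertices has a dominating set $S$ with $|S|\le |V_1|/2$.
\item Then $V_2\cup S$ dominates $G$, so $\gamma(G)\le |V_2|+|V_1|/2=\gamma_{tR}(G)/2$.
\end{itemize}
Your ``pairing'' idea is exactly what Ore's theorem supplies.

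\textbf{Item (8).} The step ``every vertex is needed, hence a disjoint union of $K_2$'s'' needs one line. In a component of order at least $3$, a leaf $x$ of a spanning tree has no leaf neighbour in $G$. So $V\setminus\{x\}$ is still total dominating, contradicting $\gamma_t=n$.
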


The next lemma is immediate from Proposition~\ref{AA}.
\begin{lemma}\label{m2}
For any graph $G$ with no isolated vertices,
\begin{enumerate}
\item if $\gamma (G)=\gamma_t(G)$, then $b_t(G)\leq b(G)$;
\item if $\gamma _R (G)=\gamma_{qtR}(G)$, then $b_{qtR}(G)\leq b_R(G)$;
\item if $\gamma _{qtR}(G)=\gamma_{tR}(G)$, then $b_{tR}(G)\leq b_{qtR}(G)$;
\item if $\gamma _R(G)=\gamma_{tR}(G)$, then $b_{tR}(G)\leq b_{qtR}(G)\leq b_R(G)$;
\item if $\gamma _t(G)=\gamma_{tR}(G)$, then $b_{tR}(G)\leq b_t(G)$;
\item if $\gamma_{tR}(G)=2\gamma _t(G)$, then $b_t(G)\leq b_{tR}(G)$;
\item if $2\gamma (G)=\gamma_{tR}(G)$, then $b_{tR}(G)\leq b(G)$;
\item if $\gamma _{tR}(G)=3\gamma (G)$, then $b(G)\leq b_{tR}(G)$;
\item if $\gamma (G)=\gamma _R(G)$, then $b_R(G)\leq b(G)$;
\item (\cite[Theorem~5.4]{HuXu}) If $\gamma _R(G)=2\gamma (G)$, then $b(G)\leq b_R(G)$.
\end{enumerate}
\end{lemma}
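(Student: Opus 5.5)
Every item of the lemma rests on one comparison principle. Let $\mu$ and $\nu$ be two of the parameters $\gamma,\gamma_t,\gamma_R,\gamma_{qtR},\gamma_{tR}$ (or a fixed multiple of one). Suppose $\mu(H)\le\nu(H)$ for every graph $H$ without isolated vertices, and suppose $\mu(G)=\nu(G)$. Let $E'$ be an edge set realizing the bondage number associated with $\mu$, so $G-E'$ has no isolated vertices and $\mu(G-E')>\mu(G)$. Then
\[
\nu(G-E')\ \ge\ \mu(G-E')\ >\ \mu(G)\ =\ \nu(G),
\]
so $E'$ also increases $\nu$. Hence the bondage number for $\nu$ is at most $|E'|$, which is the bondage number for $\mu$. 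If no such $E'$ exists, the $\mu$-bondage number is $\infty$ and the inequality holds trivially. Thus the general rule is: \emph{the bondage number of the larger parameter is at most that of the smaller one, whenever the two coincide on $G$}. One caveat needs checking. The definitions of $b$ and $b_R$ as recalled do not require $G-E'$ to be free of isolated vertices, while $b_t,b_{qtR},b_{tR}$ do. So in the direction "from $b$ or $b_R$ to a total-type number" I must make sure a minimum bondage set can be chosen to leave no isolated vertices. I handle this by reading all bondage numbers in the paper's convention, where $E'$ leaves no isolated vertices. With that convention the principle applies verbatim.

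Items (1)--(5), (7) and (9) then follow directly from the appropriate inequality in Proposition~\ref{AA}:
\begin{itemize}
\item (1) uses $\gamma\le\gamma_t$, part (2).
\item (2), (3) and (4) use $\gamma_R\le\gamma_{qtR}\le\gamma_{tR}$, part (3). For (4), the hypothesis $\gamma_R(G)=\gamma_{tR}(G)$ forces all three values to be equal, and the principle is applied twice.
\item (5) uses $\gamma_t\le\gamma_{tR}$, part (4).
\item (7) uses $2\gamma\le\gamma_{tR}$, part (5), applied with $\mu=2\gamma$. Scaling does not affect the strict increase.
\item (9) uses $\gamma\le\gamma_R$, part (6).
\end{itemize}

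Items (6) and (8) go the other way and use the upper bounds instead. For (6), take a $b_{tR}(G)$-set $E'$. Then
\[
2\gamma_t(G-E')\ \ge\ \gamma_{tR}(G-E')\ >\ \gamma_{tR}(G)\ =\ 2\gamma_t(G)
\]
by part (4), so $\gamma_t$ increases and $b_t(G)\le b_{tR}(G)$. Item (8) is identical, using $3\gamma\ge\gamma_{tR}$ from part (5). Item (10) is quoted from \cite{HuXu}, and it also follows from the same argument with $2\gamma\ge\gamma_R$.

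The only real subtlety is the isolated-vertex convention in the comparisons with $b$ and $b_R$, namely items (7), (8) and (9). I would settle it by stating explicitly that all bondage sets are taken to leave no isolated vertices. Once that is fixed, everything else is a one-line chain of inequalities.
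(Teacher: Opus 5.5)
Your comparison principle is the argument the paper intends; it only says the lemma is immediate from Proposition~\ref{AA}. The principle is valid whenever the edge set you transport is admissible for the target bondage number. The gap is in how you handle the isolated-vertex issue. You correctly note that $b$ and $b_R$, as recalled in the paper, carry no isolate-free requirement. You then read \emph{all} bondage numbers as isolate-free and call this the paper's convention. It is not the paper's convention; it is a redefinition. Under the definitions actually in force the step cannot be repaired, because items (1) and (7) are false.

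Let $G_0$ be the triangle $xyz$ with two pendant vertices attached to each of $x,y,z$.
\begin{enumerate}
\item $\gamma(G_0)=\gamma_t(G_0)=3$.
\item $\gamma_{tR}(G_0)=6=2\gamma(G_0)$: every TRDF has weight at least $2$ on each $\{v\}\cup L(v)$, and assigning $2$ to the triangle is a TRDF.
\item $b(G_0)=1$: deleting the single pendant edge $xx_1$ gives $\gamma=4$, since each of $\{x_1\}$, $\{x,x_2\}$, $\{y\}\cup L(y)$, $\{z\}\cup L(z)$ must meet a dominating set.
\item $b_t(G_0)=b_{tR}(G_0)=2$: an isolate-free deletion may remove only triangle edges. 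One such edge changes neither $\gamma_t$ nor $\gamma_{tR}$. Two of them isolate a triangle vertex and raise these to $4$ and $7$.
\end{enumerate}
Hence $b_t(G_0)=b_{tR}(G_0)=2>1=b(G_0)$, contradicting (1) and (7).

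You have also placed the problem in the wrong items. It occurs exactly when a $b$- or $b_R$-set is fed into a parameter that requires an isolate-free graph. That is the case in (1) and (7), and somewhere in the chain (2)--(4), since the unrestricted $b_R$ is ultimately compared with the restricted $b_{tR}$. Items (8)--(10) are safe under either convention. In (8) you transport a $b_{tR}$-set, which is already isolate-free. In (9) and (10) both parameters are defined for graphs with isolated vertices.

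What your argument actually proves is the lemma with $b$ and $b_R$ restricted to isolate-free deletions. Alternatively, it proves the lemma with the added hypothesis that some $b(G)$-set (resp.\ $b_R(G)$-set) leaves no isolated vertex. Either version must be stated as a change to the statement, not presented as the paper's reading. The change matters: the paper's one-line justification overlooks the same point, and Theorem~\ref{3} later combines (1) with Fink et al.'s upper bound, which concerns the unrestricted $b$.
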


Let $\mathcal{G}$ be the family of unicyclic graphs obtained from $C_4 = (v_1, v_2, v_3, v_4)$
by attaching $k_1$ copies of $P_3$ at $v_1$ and $k_2$ copies of $P_3$ at $v_2$ where $k_1 \geq 0$ and $k_2 \geq 0$. Let $\mathcal{H}$ be the family of trees obtained from a bistar by
subdividing each pendant edge once and the central edge $r$ times where $r \geq 0$.

\begin{prelem}\label{ahah1}
(\cite[Theorem~13]{ah1}) Let $G$ be a connected graph of order $n$. Then $\gamma_{tR}(G)=n$ if and only if one of the following holds.
\begin{enumerate}
\item $G$ is a path or cycle;
\item $G=\mathrm{cor}(F)$ for some graph $F$;
\item $G$ is a subdivided star;
\item $G\in\mathcal{G}\cup\mathcal{H}$.
\end{enumerate}
\end{prelem}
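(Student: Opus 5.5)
The plan rests on the identity $\omega(f)=|V_1|+2|V_2|=n+|V_2|-|V_0|$ for a function $f=(V_0,V_1,V_2)$ on $V$. The constant function $f\equiv 1$ is always a TRDF, because $G$ has no isolated vertices. Hence $\gamma_{tR}(G)\le n$ always holds, and $\gamma_{tR}(G)=n$ says exactly that every TRDF satisfies $|V_0|\le |V_2|$. A second basic fact is also needed: if $H$ is a spanning subgraph of $G$ without isolated vertices, then any TRDF of $H$ is a TRDF of $G$, so $\gamma_{tR}(G)\le\gamma_{tR}(H)$. In particular $\gamma_{tR}(G)\le \gamma_{tR}(T)$ for every spanning tree $T$ of a connected $G$.

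\emph{Sufficiency.} For each family I will show $|V_0|\le|V_2|$ by a local charging argument, assigning each vertex of $V_0$ to a distinct vertex of $V_2$.
\begin{enumerate}
\item Paths and cycles: a vertex in $V_2$ has at most two neighbours, and at least one of them must be positive. So it has at most one neighbour in $V_0$, and mapping each $0$-vertex to some $2$-neighbour is injective.
\item $\mathrm{cor}(F)$: pair each leaf with its support vertex. If the support is $0$, the leaf cannot be $0$ (its only neighbour is not $2$), and it cannot be positive (it would be isolated in $G[V_1\cup V_2]$). So the support is positive. If the leaf is $0$, the support must be $2$. Either way each pair has weight at least $2$, and summing over the pairs gives $\omega(f)\ge n$.
\item Subdivided stars and the families $\mathcal{G},\mathcal{H}$: I will use the same pairing of each leaf with its support, which here has degree $2$. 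A short check handles the remaining head, cycle or central-path vertices, using the path argument on the subdivided central edge.
\end{enumerate}

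\emph{Necessity for trees.} Let $T$ be a tree with $\gamma_{tR}(T)=n$. I will show it is a path, a corona, a subdivided star, or lies in $\mathcal{H}$, by induction on $n$ and analysis of a longest path $x_0x_1\cdots x_d$.
\begin{enumerate}
\item If some support vertex $v$ is strong, put $f(v)=2$, $0$ on the leaves of $v$, and $1$ elsewhere. If $v$ then has no positive neighbour, raise one leaf to $1$. This gives weight at most $n-1$, except for $P_3$, which is a path.
\item So every support vertex has exactly one leaf. Examine $x_1$, which has degree $2$ or has a leaf plus $x_2$, and then $x_2$. The options are: $x_2$ has further degree-$2$ pendant paths (a spider piece), $x_2$ carries its own leaf (a corona piece), or $x_2$ has degree $2$ (a path piece).
\item In each case either I exhibit a saving of one by a local reassignment, typically $2$ on $x_2$ with $0$ on two of its neighbours, or I delete a pendant $P_2$ or $P_3$. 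Induction then applies to the smaller tree, and I check that re-attaching the piece keeps us inside the listed families. This gluing check is what produces the bistar-with-subdivisions family $\mathcal{H}$.
\end{enumerate}

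\emph{Necessity for graphs with cycles.} Since $\gamma_{tR}(G)\le\gamma_{tR}(T)\le n$, every spanning tree $T$ of $G$ must itself be one of the tree families. I then ask which non-tree edges can be added to a path, a corona of a tree, a subdivided star, or a member of $\mathcal{H}$ so that every spanning tree of the result still lies in those families.
\begin{enumerate}
\item A path plus edges forces a cycle, since any chord creates a spanning tree with a branch vertex and two long arms, which gives a saving.
\item Edges between non-leaf vertices of a corona keep it a corona.
\item Subdivided stars and $\mathcal{H}$-trees only tolerate the specific extra edges producing the $C_4$ configurations of $\mathcal{G}$.
\end{enumerate}
This last step is the main obstacle: it requires a careful but finite case analysis of where an extra edge can go. Whenever the edge does not land in one of the allowed configurations, I will directly exhibit either a spanning tree outside the families or an explicit TRDF of weight $n-1$.
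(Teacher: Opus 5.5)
\textbf{There is a genuine gap in the non-tree case of necessity: your step (1) is false, step (2) is incomplete, and the case analysis that would settle it is never carried out.}

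The paper does not prove this statement; it is quoted from its source. So I judge your plan on its own terms. The sufficiency checks, the bound $\gamma_{tR}\le n$, and spanning-subgraph monotonicity are all fine. Your tree induction can also be made rigorous: $\gamma_{tR}(T)\le\gamma_{tR}(T-\{x_0,x_1\})+2$ forces the smaller tree to attain its order as well. The gluing check itself, however, is only promised.

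\textbf{Step (1) is false.} You claim that ``a path plus edges forces a cycle'' because any chord creates a spanning tree with a saving. Take the $4$-cycle $(v_1,v_2,v_3,v_4)$ with the pendant path $v_1\,a\,b$ attached at $v_1$.
\begin{itemize}
\item This is the member of $\mathcal{G}$ with $k_1=1$, $k_2=0$, so $\gamma_{tR}=6=n$ by your own sufficiency argument.
\item It is the path $b\,a\,v_1\,v_2\,v_3\,v_4$ plus the single chord $v_1v_4$, and it is not a cycle.
\item Its spanning trees are $P_6$ (delete $v_1v_2$ or $v_1v_4$) and the spider with arms of lengths $2,2,1$ (delete $v_2v_3$ or $v_3v_4$), which is $\mathrm{cor}(P_3)$.
\end{itemize}
So the chord does create ``a branch vertex with two long arms,'' but that tree lies in your list and gives no saving. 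The same happens for $k_1=k_2=1$: the path $b_1a_1v_1v_4v_3v_2a_2b_2$ plus the chord $v_1v_2$. The correct statement is that adding edges to a path yields a cycle, a member of $\mathcal{G}$ with $k_1,k_2\le 1$, or a graph with $\gamma_{tR}<n$. Proving even that takes real work.

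\textbf{Step (2) has the same blind spot.} You only treat edges between non-leaf vertices of a corona. But joining the leaves of two adjacent vertices of $F$ is exactly how $\mathcal{G}$ arises from coronas. For example, $\mathrm{cor}(P_4)$ plus the edge between the leaves of its two middle vertices is the member of $\mathcal{G}$ with $k_1=k_2=1$.

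The whole classification of graphs with cycles is deferred to an unspecified ``finite case analysis,'' and the only concrete claims about it are wrong or incomplete. So necessity for non-trees is not established.

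\textbf{A cleaner way to organize it} is induction on the cyclomatic number.
\begin{itemize}
\item For any edge $e$ on a cycle, $G-e$ is connected and $n\ge\gamma_{tR}(G-e)\ge\gamma_{tR}(G)=n$. Hence $G-e$ is already in the list.
\item It then remains to classify which single edges can be added to each listed graph without creating a TRDF of weight at most $n-1$.
\item ``Each listed graph'' here includes cycles, coronas of arbitrary connected $F$, and $\mathcal{G}$ itself, not just the trees.
\end{itemize}
This avoids handling many extra edges at once, and it forces the $\mathcal{G}$ configurations to appear explicitly.
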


\begin{prelem}\label{th4} 
(\cite[Theorem~6]{ah1}) For $G$ with no isolated vertices, $\gamma_t(G)=\gamma_{tR}(G)$ if and only if $G$ is a disjoint union of $K_2$'s.
\end{prelem}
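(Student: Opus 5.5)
The plan is to prove the easy direction by direct computation and the converse by comparing an optimal total Roman dominating function with a total dominating set built from its support. Both parameters are additive over connected components, so I work component by component where convenient.

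\emph{Sufficiency.} Suppose $G$ is a disjoint union of $K_2$'s. In each component $K_2$, any total dominating set must contain both vertices, and any TRDF has weight at least $2$. Indeed, either both vertices are positive, or one vertex gets $0$; in the latter case the other gets $2$ and is then isolated in the positive subgraph, which is forbidden. The assignment $(1,1)$ is a TRDF. Hence $\gamma_t(K_2)=\gamma_{tR}(K_2)=2$, and summing over components gives $\gamma_t(G)=\gamma_{tR}(G)=n$.

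\emph{Necessity, first step.} Assume $\gamma_t(G)=\gamma_{tR}(G)$ and let $f=(V_0,V_1,V_2)$ be a $\gamma_{tR}(G)$-function. I claim $S=V_1\cup V_2$ is a total dominating set.
\begin{itemize}
\item Every $v\in V_0$ has a neighbor in $V_2\subseteq S$.
\item Every $v\in S$ has a neighbor in $S$, because $G[S]$ has no isolated vertices.
\end{itemize}
Therefore
\[
\gamma_t(G)\le |V_1|+|V_2|\le |V_1|+2|V_2|=\gamma_{tR}(G)=\gamma_t(G).
\]
Equality throughout forces $V_2=\emptyset$. Then $V_0=\emptyset$ as well, since each vertex of $V_0$ needs a neighbor in $V_2$. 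So $f\equiv 1$ and $\gamma_t(G)=\gamma_{tR}(G)=n$.

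\emph{Necessity, main step.} It remains to show that $\gamma_t(G)=n$ forces every component to be $K_2$. Since $\gamma_t$ is additive over components and $\gamma_t(H)\le |V(H)|$ for each component $H$, every component must satisfy $\gamma_t(H)=|V(H)|$. Suppose some component $H$ has order at least $3$.
\begin{itemize}
\item Take a spanning tree $T$ of $H$, a leaf $x$ of $T$, and its support vertex $y$.
\item Because $|V(H)|\ge 3$ and $T$ is connected, $y$ has a neighbor $z\ne x$ in $T$.
\item Set $D=V(H)\setminus\{x\}$. In $T$, $x$ is dominated by $y$, and $y$ is dominated by $z$.
\item Any other vertex $w\ne x,y$ has a $T$-neighbor in $D$, since the only $T$-neighbor of $x$ is $y$.
\end{itemize}
Thus $D$ totally dominates $T$, and hence $H$, so $\gamma_t(H)\le |V(H)|-1$. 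This is a contradiction. So every component is $K_2$.

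The only step needing care is this last one: producing a total dominating set of size $|V(H)|-1$ in a connected graph of order at least $3$. The spanning-tree leaf argument handles it cleanly.
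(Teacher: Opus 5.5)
Your proof is correct and complete. The paper gives no argument of its own for this statement. It quotes it from \cite[Theorem~6]{ah1}, and the same fact appears again as item (8) of Proposition~\ref{AA}. So there is no in-paper proof to compare with, and your argument is a self-contained replacement for the citation.

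It rests on two observations.
\begin{enumerate}
\item The positive set $V_1\cup V_2$ of any TRDF is a total dominating set. This gives $\gamma_t(G)\le |V_1|+|V_2|\le \gamma_{tR}(G)$, which also reproves the lower bound in Proposition~\ref{AA}(4). Under equality it forces $V_2=\emptyset$, hence $V_0=\emptyset$, hence $\gamma_t(G)=n$.
\item Among connected isolate-free graphs, only $K_2$ has $\gamma_t(H)=|V(H)|$. Your spanning-tree argument settles this directly: $V(H)\setminus\{x\}$ is a total dominating set because $y$ has a second tree-neighbor $z$, and no vertex other than $y$ is adjacent to $x$ in $T$.
\end{enumerate}

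You could instead finish with the known bound $\gamma_t(H)\le 2|V(H)|/3$ for connected graphs of order at least $3$. Your elementary argument avoids relying on any outside result.
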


\begin{corollary}\label{infty}
For each graph $G$ with no isolated vertices, $b_{tR}(G)=\infty$ if and only if every component is
\begin{enumerate}
\item a star;
\item a healthy spider or wounded spider with one wounded foot;
\item a path or cycle;
\item a corona of a connected graph;
\item any graph in $\mathcal{G}\cup\mathcal{H}$.
\end{enumerate}
In particular, if $\gamma_{tR}(G)=\gamma_t(G)$, then $b_{tR}(G)=\infty$.
\end{corollary}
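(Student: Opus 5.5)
The characterization rests on one observation. Removing edges from a graph cannot decrease $\gamma_{tR}$ by too much. More precisely, a TRDF of $G-E'$ is also a TRDF of $G$, since adding edges keeps every vertex with $f(v)=0$ dominated by a $2$ and keeps $G[V_1\cup V_2]$ free of isolated vertices. Hence $\gamma_{tR}(G-E')\ge \gamma_{tR}(G)$ always holds. Deleting edges inside a component never increases the number of vertices, so $\gamma_{tR}(H)\le |V(H)|$ for every graph $H$ without isolated vertices: the function that is identically $1$ is a TRDF. Since $\gamma_{tR}$ is additive over components, the plan is to show that $b_{tR}(G)=\infty$ holds exactly when no component $C$ admits an edge set $E'\subseteq E(C)$ with $C-E'$ free of isolated vertices and $\gamma_{tR}(C-E')>\gamma_{tR}(C)$. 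This reduces everything to a connected graph $C$.

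For a connected $C$, I would show that such an $E'$ exists if and only if $\gamma_{tR}(C)<|V(C)|$. For necessity: if $\gamma_{tR}(C)=|V(C)|$, then every spanning subgraph without isolated vertices has $\gamma_{tR}$ at most $|V(C)|$, so no increase is possible. For sufficiency: suppose $\gamma_{tR}(C)<n=|V(C)|$. I would pass to a minimal spanning subgraph $F$ of $C$ without isolated vertices, namely a spanning forest of stars (a minimal edge cover). The stars $K_2$ contribute $2$ each, and a star $K_{1,t}$ with $t\ge 2$ contributes $3\le t+1$. This does not obviously reach $n$. So instead I would choose $E'$ to be a maximal edge set whose removal leaves no isolated vertices and keeps $\gamma_{tR}=\gamma_{tR}(C)$. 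Then I argue that the resulting graph $C'$ still satisfies $\gamma_{tR}(C')<n$, and that some further deletion increases $\gamma_{tR}$. If no further deletion is possible, $C'$ is a disjoint union of stars. For such a graph, $\gamma_{tR}$ equals $2$ per $K_2$ and $3$ per larger star. Removing a leaf edge from a star $K_{1,t}$ with $t\ge 3$ and pairing... is not possible, because it would isolate the leaf. This is the main obstacle: an edge cover by stars can have $\gamma_{tR}$ strictly below $n$ while admitting no deletion at all.

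The cleaner route is to use the structural list directly. By Proposition~\ref{ahah1}, $\gamma_{tR}(C)=n$ exactly for paths, cycles, coronas, subdivided stars and $\mathcal{G}\cup\mathcal{H}$. The remaining items of the corollary (stars and wounded spiders with one wounded foot) are the graphs of order $n$ with $\gamma_{tR}=n-1$ whose spanning subgraphs without isolated vertices are all stars or the graph itself. For these, I would verify directly that any deletion leaves either the same graph or a disconnected star forest of weight at most $n-1$. Conversely, for a connected $C$ not on the list, I would exhibit a spanning tree $T$ of $C$ and then a spanning subgraph of $T$ from the list of Proposition~\ref{ahah1} (a path, subdivided star, or corona-type subtree) with $\gamma_{tR}=n>\gamma_{tR}(C)$. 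In the hard case, where $\gamma_{tR}(C)=n-1$ and $C$ is not a star or a wounded spider, I would locate a spanning subtree with $\gamma_{tR}=n$, or a subgraph with two nontrivial star components, to force an increase. This case analysis is the step I expect to be hardest.

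The final claim follows from Proposition~\ref{th4}. If $\gamma_{tR}(G)=\gamma_t(G)$, then $G$ is a union of copies of $K_2$. Every component is then a star (equally, a path), and no edge can be deleted without creating an isolated vertex, so $b_{tR}(G)=\infty$.
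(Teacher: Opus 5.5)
The reduction to components, the ``if'' direction for components with $\gamma_{tR}=n$ and for stars, and the final sentence are fine. The ``only if'' direction, however, is not proved, and both routes you sketch break at the decisive point.

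Your first route rests on ``an admissible $E'$ exists iff $\gamma_{tR}(C)<|V(C)|$'', which fails for $K_{1,t}$ with $t\ge 3$. You noticed this, and it is exactly the step where the paper's own short argument slips: it asserts $\gamma_{tR}(G_i)=|V(G_i)|$ for the star components of an edge-minimal isolate-free $G-E'$.

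Your ``cleaner route'' misdescribes the list. The wounded spider $S(1,t)=\mathrm{cor}(K_{1,t-1})$ has $\gamma_{tR}=n$, not $n-1$, and $\gamma_{tR}(K_{1,t})=3$ equals $n-1$ only for $t=3$. More seriously, its converse plan is to find a spanning subgraph with $\gamma_{tR}=n$. An isolate-free spanning subgraph has $\gamma_{tR}=n$ only if it contains a spanning subgraph all of whose components are $K_2$ or $P_3$. Many graphs off the list have no such factor. For example, the bistar $S_{3,3}$ has $\gamma_{tR}=4$, and its only proper isolate-free spanning subgraph is $2K_{1,3}$, with $\gamma_{tR}=6<8$; $K_{2,5}$ and $S(3,4)$ behave likewise.

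Your ``hard case'' $\gamma_{tR}(C)=n-1$ is also the wrong one. A connected non-star graph without such a factor has $\gamma_{tR}\le n-2$ (see below), so when $\gamma_{tR}(C)=n-1$ a factor exists and you are done. The real work is for graphs with no factor, where you need a star forest whose weight lies strictly between $\gamma_{tR}(C)$ and $n$.

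The missing idea is a deficiency count. Since deleting edges never lowers $\gamma_{tR}$, it suffices to find a spanning star forest $F$ of $C$ with $\gamma_{tR}(F)>\gamma_{tR}(C)$; here $\gamma_{tR}(F)$ is $2$ per $K_2$ and $3$ per larger star. Let $d=\max_S\bigl(i(C-S)-2|S|\bigr)\ge 0$, where $i$ counts isolated vertices.

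If $d=0$, the Akiyama--Avis--Era (Amahashi--Kano) criterion gives a $\{K_2,P_3\}$-factor of weight $n$. This exceeds $\gamma_{tR}(C)$ by Proposition~\ref{ahah1}, since $C$ is not on the list.

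If $d>0$, apply the same criterion to $C$ plus $d$ new vertices joined to all of $C$, each with a private pendant leaf. This gives a $\{K_2,P_3\}$-packing of $C$ missing at most $d$ vertices. In a maximum packing every uncovered vertex is adjacent only to $P_3$-centres, and attaching each one to such a centre yields a star forest $F$ with $\gamma_{tR}(F)\ge n-d$.

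For a maximizing $S$, let $I$ be the isolated vertices of $C-S$ and $R$ the rest. Put $2$ on $S$, $1$ on $R$ and $0$ on $I$. Then raise to $1$ a set $J\subseteq I$ containing an $I$-neighbour of each $S$-vertex that has no neighbour outside $I$. This is a TRDF of weight $2|S|+|R|+|J|=n-d-(|S|-|J|)$.

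Moreover $|J|\le|S|-1$ unless $C$ is a star. Indeed, if no $S$-vertex has a neighbour outside $I$, connectivity forces $R=\emptyset$, and when $|S|\ge 2$ it gives an $I$-vertex adjacent to two $S$-vertices, which $J$ can share. Hence $\gamma_{tR}(C)\le n-d-1<\gamma_{tR}(F)$.
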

\begin{proof}
If $G_1,\dots,G_k$ are the components of $G$, then $\gamma_{tR}(G)=\sum\gamma_{tR}(G_i)$.
So in view of Proposition \ref{ahah1}, it is enough to prove the \textit{only if } part. Suppose $b_{tR}(G)=\infty$. Then for every $E'\subseteq E(G)$, either $G-E'$ has an isolated vertex or $\gamma_{tR}(G-E')=\gamma_{tR}(G)$. If $G$ is connected and every single-edge removal creates an isolated vertex, then $G$ is a star. Otherwise, there exists $E'$ with no isolated vertices in $G-E'$ but every single-edge removal from $G-E'$ creates an isolated vertex, so $G-E'$ is a disjoint union of stars $G_1,\dots,G_k$. Therefore 
$$\gamma_{tR}(G)=\gamma_{tR}(G-E')=\sum_{1\leq i \leq k}\gamma_{tR}(G_i)=\sum_{1\leq i \leq k}|V(G_i)|=n.$$
So each component has $\gamma_{tR}=|V|$, and Proposition~\ref{ahah1} applies. The final claim follows from Proposition~\ref{th4}.
\end{proof}

\begin{prelem}\label{Pro} 
 (\cite[Proposition~2.3]{chloe}) If $G$ has no isolated vertices and $uv\in E(\overline{G})$, then $\gamma_{tR}(G)-2\leq \gamma_{tR}(G+uv)\leq \gamma_{tR}(G)$.
\end{prelem}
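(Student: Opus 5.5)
The proof splits into the two inequalities. The upper bound is monotonicity under adding an edge. The lower bound comes from repairing a minimum TRDF of $G+uv$ into a TRDF of $G$ at a cost of at most $2$. Throughout, for a function $f$ write $V_i^f=\{x: f(x)=i\}$.

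\emph{Upper bound.} Let $f$ be a $\gamma_{tR}(G)$-function. Adding $uv$ keeps every neighbor relation of $G$. So every vertex with $f$-value $0$ still has a neighbor assigned $2$. Also, $G[V_1^f\cup V_2^f]$ is a spanning subgraph of $(G+uv)[V_1^f\cup V_2^f]$, so the latter has no isolated vertices. Hence $f$ is a TRDF of $G+uv$, and $\gamma_{tR}(G+uv)\le\gamma_{tR}(G)$.

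\emph{Lower bound.} Let $f$ be a $\gamma_{tR}(G+uv)$-function. We build a TRDF $g$ of $G$ with $\omega(g)\le\omega(f)+2$. The edge $uv$ can matter for $f$ in only two ways: as the sole link from a $0$-vertex to a $2$-vertex, or as the sole edge keeping a positive vertex non-isolated in the induced positive subgraph. I would split by the values of $f(u)$ and $f(v)$; by symmetry assume $f(u)\ge f(v)$.

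\begin{enumerate}
\item If $f(v)=0$ and $f(u)\le 1$, the edge $uv$ plays no role, so $g=f$ works.
\item If $f(u)=2$ and $f(v)=0$, only $v$'s domination may be lost. Put $g(v)=1$. If $v$ already has a positive neighbor in $G$, we are done at cost $1$. Otherwise choose any neighbor $w$ of $v$ in $G$; one exists because $G$ has no isolated vertices. Then $f(w)=0$, and we set $g(w)=1$. Now $v$ and $w$ are adjacent positive vertices, so neither is isolated. No other vertex loses anything, since $u$'s positivity never relied on $uv$ (as $f(v)=0$). The cost is at most $2$.
\item If $f(u),f(v)>0$, domination is unaffected; only $u$ and/or $v$ may become isolated in $G[V_1\cup V_2]$. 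For each endpoint $x\in\{u,v\}$ that becomes isolated, pick a neighbor $w_x$ of $x$ in $G$ and raise $g(w_x)$ from $0$ to $1$. The vertex $w_x$ is then adjacent to the positive vertex $x$, so it is not isolated. Raising values never destroys domination. The cost is at most $2$.
\end{enumerate}

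In every case $\gamma_{tR}(G)\le\omega(g)\le\gamma_{tR}(G+uv)+2$, which is the lower bound.

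The main thing to check is that these repairs create no new isolated positive vertex, and that the two repairs in the case $f(u),f(v)>0$ do not interfere. Each newly positive vertex is adjacent to a vertex that is positive in $G$ by construction. If $w_u=w_v$ or $w_u=v$, the cost is only smaller. I expect no real obstacle beyond this careful case bookkeeping.
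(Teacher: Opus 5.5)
Your proof is correct. The upper bound is the monotonicity observation. In the lower bound, each case raises at most two vertices from $0$ to $1$, and each newly positive vertex is adjacent to a positive vertex, so this does yield a TRDF of $G$ of weight at most $\gamma_{tR}(G+uv)+2$.

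The paper gives no proof of this statement; it simply imports it from the cited reference. There is therefore no in-paper argument to compare against, and your direct local-repair argument is the natural proof.

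The only slip is cosmetic: the case $w_u=v$ cannot occur, since $w_u\in N_G(u)$ and $uv\notin E(G)$.
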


 The following Remarks are straightforward.
\begin{remarks}\label{Remarks}
For any graph $G$ with no isolated vertices,
\begin{enumerate}
\item every support vertex $v$ satisfies $f(v)\geq 1$ in any TRDF $f$;
\item for $n\geq 3$, $\gamma_{tR}(G)=3$ if and only if $\Delta(G)=n-1$;
\item for $n\geq 3$ with exactly $t\geq 1$ vertices of degree $n-1$, $b_{tR}(G)=\lceil t/2\rceil$ (by adapting \cite[Lemma~3.1]{HuXu} and (2));
\item $b_{tR}(K_n)=\lceil n/2\rceil$ and $b_{tR}(W_{n+1})=1$;
\item $\gamma_t(G)=2$ if and only if there are adjacent vertices $u,v$ with $N[u]\cup N[v]=V(G)$;
\item for any (double) broom, $b_{tR}(G)=1$.
\end{enumerate}
\end{remarks}

\begin{proposition}\label{4R}
For a graph $G$ of order $n\geq 3$ which has no isolated vertices, $\gamma _{tR}(G)=4$ if and only if $G=2K_{2}$ or $\Delta (G)\leq n-2$ and there exist adjacent vertices $u$ and $v$ with $N[u]\cup N[v]=V(G)$.
\end{proposition}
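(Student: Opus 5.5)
We prove both directions, using Remarks~\ref{Remarks}(2) and (5) and the bounds $\gamma_t(G)\le\gamma_{tR}(G)\le 2\gamma_t(G)$ of Proposition~\ref{AA}(4).

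\emph{Sufficiency.} If $G=2K_2$, then $G$ is a disjoint union of $K_2$'s, so $\gamma_{tR}(G)=n=4$. Otherwise suppose $\Delta(G)\le n-2$ and there are adjacent $u,v$ with $N[u]\cup N[v]=V(G)$. By Remarks~\ref{Remarks}(5), $\gamma_t(G)=2$, so $\gamma_{tR}(G)\le 2\gamma_t(G)=4$. For an explicit witness, set $f(u)=f(v)=2$ and $f\equiv0$ elsewhere. Every vertex of value $0$ lies in $N(u)\cup N(v)$, and $u,v$ are adjacent positive vertices, so $f$ is a TRDF of weight $4$. Since $n\ge3$ and $\Delta(G)\le n-2$, Remarks~\ref{Remarks}(2) gives $\gamma_{tR}(G)\ne3$. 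It remains to exclude $\gamma_{tR}(G)\le2$. A TRDF of weight at most $2$ has positive set either a single vertex, which is isolated in the induced subgraph and hence forbidden, or two vertices both with value $1$. In the latter case no vertex has value $2$, so there are no $0$-vertices, which forces $n\le2$, a contradiction. Hence $\gamma_{tR}(G)=4$.

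\emph{Necessity.} Suppose $\gamma_{tR}(G)=4$. By Remarks~\ref{Remarks}(2), $\Delta(G)\le n-2$. Take a minimum TRDF $f=(V_0,V_1,V_2)$ with $|V_1|+2|V_2|=4$ and split into cases on $|V_2|$.
\begin{itemize}
\item $|V_2|=0$: then $V_0=\emptyset$ and $n=4$. Here $G$ has no isolated vertices and, by Remarks~\ref{Remarks}(2), no vertex of degree $3$. So every degree is $1$ or $2$, and $G$ is $2K_2$, $P_4$ or $C_4$. For $P_4$ the two middle vertices, and for $C_4$ any edge, satisfy $N[u]\cup N[v]=V(G)$.
\item $|V_2|=2$, $V_1=\emptyset$: the two vertices of $V_2$ must be adjacent, since otherwise they are isolated in the positive subgraph. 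Every other vertex has value $0$ and so is adjacent to one of them, giving the required pair $u,v$.
\item $|V_2|=1$, $|V_1|=2$: say $V_2=\{w\}$ and $V_1=\{a,b\}$, with every vertex of $V_0$ adjacent to $w$. The positive subgraph on $\{w,a,b\}$ has no isolated vertex, so it contains an edge at $w$, say $wa$, or else it is the single edge $ab$ with $w$ isolated, which is impossible. Thus $w$ has a neighbour $a\in V_1$. If also $wb\in E$, then $w$ is adjacent to every other vertex, so $\deg w=n-1$, contradicting $\Delta(G)\le n-2$. Hence $wb\notin E$, and $b$ must be adjacent to $a$. Then $N[w]\cup N[a]\supseteq V_0\cup\{w,a,b\}=V(G)$ with $wa\in E$, as required.
\end{itemize}

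The only delicate point is this last case, where the pair $u,v$ must be extracted from a function with values $2,1,1$; the degree argument above handles it. The remaining steps are routine.
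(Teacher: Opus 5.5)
Your proof is correct and follows the paper's argument essentially step for step. For necessity you use the same case split on $|V_2|\in\{0,1,2\}$, including the degree argument that forces $w$ to have exactly one neighbour in $V_1$; for sufficiency you use the same witness $f(u)=f(v)=2$ together with Remarks~\ref{Remarks}(2). You also make explicit two points the paper leaves implicit: that no TRDF of weight at most $2$ exists when $n\ge 3$, and that $P_4$ and $C_4$ actually admit the required adjacent pair $u,v$.
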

\begin{proof}
Let $\gamma_{tR}(G)=4$ and $f=(V_0,V_1,V_2)$ be a $\gamma_{tR}$-function on $G$. By Remarks \ref{Remarks}(2), $\Delta(G)\leq n-2$. 

\textit{Case 1:} $|V_2|=0$. Then $V_0=\emptyset$, $|V_1|=4$, $n=4$. As $G$ has no isolated vertices and $\Delta(G) \leq 2$, possible graphs are $2K_2$, $P_4$, $C_4$.

\textit{Case 2:} $|V_2|=1$. Let $V_2=\{v\}$, $|V_1|=2$. Then $V_0$ is dominated only by $v$. Also totality and $\Delta(G)\leq n-2$,  requires $v$ adjacent to exactly one in $V_1$, with the two in $V_1$ adjacent.

\textit{Case 3:} $|V_2|=2$. Then $V_1=\emptyset$, the two in $V_2$ are adjacent (totality), and dominate $V_0$.

In each case, adjacent $u,v$ cover all vertices (or $G=2K_2$).

Conversely, if $G=2K_2$, clearly $\gamma_{tR}=4$. Otherwise, $\Delta\leq n-2$ and there exist adjacent vertices $u,v$ with $N[u]\cup N[v]=V$ imply $\gamma_{tR}\geq 4$ (by Remark~\ref{Remarks}(2)). The function $f=(V(G)\setminus\{u,v\},\emptyset,\{u,v\})$ is a TRDF of weight 4 and so $\gamma _{tR}(G)=4$.
\end{proof}

Let $\mathcal{B}$ be the class of graphs $G$ with no isolated vertices such that $b_{tR}(G)<\infty$; equivalently, no component is listed in Corollary~\ref{infty}. Henceforth, all graphs belong to $\mathcal{B}$.

\section{NP-hardness of total Roman bondage}

Our aim in this section is to show that the decision problem associated with
the total Roman bondage number is NP-hard for general graphs. \newline
\newline
\textbf{Total Roman Bondage problem (TR-Bondage)}\newline
\newline
\textbf{Instance: }A nonempty graph $G$ and a positive integer $k$.\newline
\textbf{Question: }Is $b_{tR}(G)\leq k?$

\bigskip

We show the NP-hardness of TR-Bondage problem by transforming the
3-satisfiability problem (3-SAT problem) to it in polynomial time. Recall
that the 3-SAT problem specified below was proven to be NP-complete in \cite%
{GJ}. \newline
\newline
\textbf{3-SAT problem}\newline
\newline
\textbf{Instance:} A collection $\mathcal{C}=\{C_1, C_2, \dots , C_m\}$ of
clauses over a finite set $U$ of variables such that $\left\vert C_j\right\vert =3$ for $j=1,2,\dots ,m.$\newline
\textbf{Question:} Is there a truth assignment for $U$ that satisfies all
the clauses in $\mathcal{C}$?

\begin{theorem}
The TR-Bondage problem is NP-hard.
\end{theorem}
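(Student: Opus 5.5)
We reduce 3-SAT to TR-Bondage with $k=1$, following the scheme used for the NP-hardness of the ordinary and Roman bondage problems (Hu and Xu). Given an instance $\mathcal{C}=\{C_1,\dots,C_m\}$ over $U=\{u_1,\dots,u_n\}$, we build a graph $G$ in polynomial time such that $\mathcal{C}$ is satisfiable if and only if $b_{tR}(G)=1$.

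\emph{Variable gadgets.} For each variable $u_i$ we take a gadget $H_i$ containing two literal vertices $u_i$ and $\bar u_i$. The gadget is built so that every TRDF restricted to it has weight at least a fixed amount $c$. It is also built so that the weight $c$ is attained only by functions that assign $2$ to exactly one of $u_i,\bar u_i$ together with a fixed partner vertex (for totality), plus forced values on pendant paths. A natural choice is a $4$-cycle $u_i,a_i,\bar u_i,b_i$ with pendant $P_2$'s attached so that support vertices are forced positive, using Remarks~\ref{Remarks}(1).

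\emph{Clause gadgets.} For each clause $C_j$ we add a vertex $c_j$ joined to the three literal vertices occurring in $C_j$. We then attach to $c_j$ structure, for example a pendant path, that keeps the graph in $\mathcal{B}$. This attachment must be designed so that $c_j$ is dominated for free exactly when one of its literal neighbours carries $2$.

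\emph{Step 1: compute $\gamma_{tR}(G)$.} We show $\gamma_{tR}(G)=nc+\ell m$ for the appropriate constant $\ell$, independently of satisfiability. The upper bound comes from an explicit TRDF. The lower bound comes from summing local lower bounds over the gadgets, charging each vertex to a gadget.

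\emph{Step 2: satisfiable implies $b_{tR}(G)=1$.} Fix a satisfying assignment and let $e$ be a designated edge, for instance $u_i a_i$ or an edge inside a gadget, whose removal destroys every minimum TRDF. The intended structure is that every $\gamma_{tR}$-function of $G$ corresponds to a truth assignment in which each $c_j$ is dominated through a literal vertex. Deleting one edge per variable gadget then forces an increase. To make a single edge suffice, I would instead add a special vertex $s$ adjacent to all clause vertices, whose domination relies on a satisfying configuration. Deleting the edge that makes $s$ ``cheap'' then raises $\gamma_{tR}$ exactly when every optimal function uses it. We need $\gamma_{tR}(G-e)>\gamma_{tR}(G)$ together with no isolated vertex in $G-e$.

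\emph{Step 3: unsatisfiable implies $b_{tR}(G)\ge 2$.} If $\mathcal{C}$ is unsatisfiable, we show that for every edge $e$ there is a TRDF of $G-e$ of weight $\gamma_{tR}(G)$. This is done by case analysis on the location of $e$: inside a variable gadget, between a literal and a clause vertex, or in the clause attachments. In each case we exhibit an alternative minimum function avoiding $e$. Here unsatisfiability means the optimal functions must already pay an extra unit at some clause, and that slack provides the needed flexibility.

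The main obstacle is designing the gadgets so that Steps 1 and 3 hold simultaneously. First, every $\gamma_{tR}$-function must be rigid, with values forced on gadgets, so that the counting in Step 1 is exact. Second, there must be enough alternative optimal functions that no single edge deletion increases $\gamma_{tR}$ in the unsatisfiable case. I would first try the Hu--Xu Roman bondage gadget, adjusting it so that every vertex with positive value has a positive neighbour. Concretely, this means replacing pendant leaves by pendant $P_2$'s, which keeps totality local, and then verifying the case analysis of Step 3 edge class by edge class. Membership in NP is not claimed, so only hardness is needed.
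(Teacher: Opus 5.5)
\textbf{There is a genuine gap.} What you have written is a template rather than a reduction. No concrete graph is ever fixed: the variable gadget, the clause attachment, the special vertex $s$ and the edge to delete are all tentative, and none of Steps 1--3 is proved. You also explicitly defer the decisive design question.

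Beyond being incomplete, the plan is internally inconsistent. Step 1 asserts that $\gamma_{tR}(G)=nc+\ell m$ independently of satisfiability. Step 3, however, relies on unsatisfiable instances ``paying an extra unit at some clause''. These cannot both hold. Your own suggested attachment shows the weight does react to satisfiability: with a pendant path $c_jx_jy_j$, these three vertices cost $2$ when $c_j$ has a neighbour of value $2$ and $3$ otherwise. So Step 1 is false as stated, and nothing in Steps 2--3 is set up to work with the correct, satisfiability-dependent value.

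\textbf{The missing idea} is to let the weight itself certify satisfiability and to get $b_{tR}$ from a sandwich, not from an edge-by-edge search for alternative minimum functions. The paper's argument runs as follows.
\begin{enumerate}
\item Each $H_i$ forces weight at least $4$, and the diamond $F$ on $\{o,p,q,r\}$ (with $r$ joined to every $c_j$) forces weight at least $3$. Hence $\gamma_{tR}(G)\ge 4n+3$.
\item Equality forces $f(c_j)=0$ and $f(r)\le 1$. So every $c_j$ has a literal neighbour of value $2$, and $\gamma_{tR}(G)=4n+3$ if and only if $\mathcal{C}$ is satisfiable.
\item For every edge $e$, $\gamma_{tR}(G-e)\le 4n+4$. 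Put $2$ on $r$ and on one of $p,q$; this dominates $F$ and all $c_j$ at once. Add a suitably chosen pair of $2$'s in each $H_i$.
\item Deleting $pq$ turns $F$ into a $4$-cycle, which together with the $c_j$'s needs weight at least $4$. Hence $\gamma_{tR}(G-pq)\ge 4n+4$.
\end{enumerate}
Therefore $b_{tR}(G)=1$ iff $\gamma_{tR}(G)=4n+3$ iff $\mathcal{C}$ is satisfiable. In the unsatisfiable case, $\gamma_{tR}(G)\ge 4n+4$ already meets the universal upper bound from step 3, so no single deletion can raise it. This replaces your Step 3 entirely.

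Your vertex $s$ plays the role of $r$. Note, though, that the edge deleted is $pq$, which destroys the cheap configuration $f(p)=2$, $f(r)=1$. Deleting an edge $rc_j$ would not work, because the function built from a satisfying assignment remains a TRDF of $G-rc_j$.
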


\begin{proof}
Let $U=\{u_1, u_2, \dots , u_n\}$ and $\mathcal{C}%
=\{C_1, C_2, \dots , C_m\}$ be an arbitrary instance of the 3-SAT problem.
We will construct a graph $G$ and a positive integer $k$ such that $\mathcal{%
C}$ is satisfiable if and only if $b_{tR}(G)\leq k.$

\begin{figure}[ht]
\centering
\begin{tikzpicture}[scale=.8, transform shape]
\node [draw, shape=circle,fill=black,scale=0.5] (a1) at  (0,0) {};
\node [draw, shape=circle,fill=black,scale=0.5] (a2) at  (1,0) {};
\node [draw, shape=circle,fill=black,scale=0.5] (a3) at  (2,0) {};
\node [draw, shape=circle,fill=black,scale=0.5] (a4) at  (0,1) {};
\node [draw, shape=circle,fill=black,scale=0.5] (a5) at  (1,1) {};
\node [draw, shape=circle,fill=black,scale=0.5] (a6) at  (2,1) {};
\node [draw, shape=circle,fill=black,scale=0.5] (a7) at  (1,2) {};
\draw(a1)--(a3)--(a6)--(a1)--(a4)--(a3)--(a6)--(a7)--(a4)--(a6);
\node at (0,-.4) {$u_i$};
\node at (1,-.4) {$t_i$};
\node at (2,-.4) {$\overline{u_i}$};
\node at (0,1.4) {$a_i$};
\node at (1,1.4) {$s_i$};
\node at (2,1.4) {$b_i$};
\node at (1,2.4) {$d_i$};
\node [draw, shape=circle,fill=black,scale=0.5] (a1) at  (3,1) {};
\node [draw, shape=circle,fill=black,scale=0.5] (a2) at  (4,0) {};
\node [draw, shape=circle,fill=black,scale=0.5] (a3) at  (5,1) {};
\node [draw, shape=circle,fill=black,scale=0.5] (a4) at  (4,2) {};
\draw(a1)--(a3)--(a2)--(a1)--(a4)--(a3);
\node at (2.7,1) {$q$};
\node at (5.3,1) {$p$};
\node at (4,-.3) {$o$};
\node at (4,2.3) {$r$};
\end{tikzpicture}
\caption{The graphs $H_i$ and $F$.}
\label{fig-1}
\end{figure}
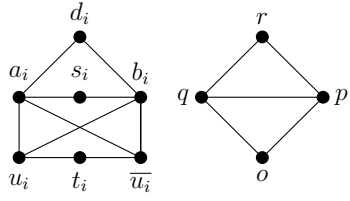

For each variable $u_i\in U,$ associate the connected graph $H_{i}$ as
shown in Figure 1. Corresponding to each clause $C_{j}=\{x_j, y_j, z_j\}\in \mathcal{C}$, associate a single vertex $c_{j}$ and add the edge-set $E_j=\{c_jx_j,c_jy_j,c_jz_j\}$. Next add the graph $F$ and join
vertex $r$ to each $c_j$, and let $G$ be the resulting graph. Clearly $G$
is a graph of order $7n+m+4$ and size $10n+4m+5$.  An example of the graph $G$ when $U=\{u_1, u_2, u_3, u_4\}$ and $\mathcal{C}=\{\{u_1, u_2, \overline{u_3}\}, \{u_2, \overline{u_3}, \overline{u_4}\}\}$ is illustrated in Figure \ref{fig-2}. 

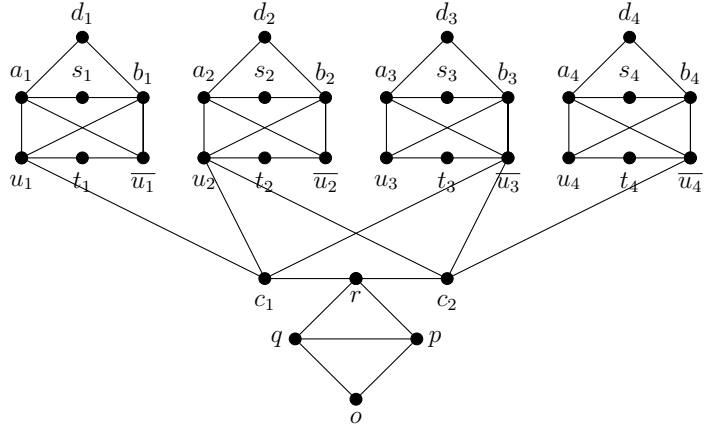
\begin{figure}[ht]
\centering
\begin{tikzpicture}[scale=.8, transform shape]
\node [draw, shape=circle,fill=black,scale=0.5] (a1) at  (0,0) {};
\node [draw, shape=circle,fill=black,scale=0.5] (a2) at  (1,0) {};
\node [draw, shape=circle,fill=black,scale=0.5] (a3) at  (2,0) {};
\node [draw, shape=circle,fill=black,scale=0.5] (a4) at  (0,1) {};
\node [draw, shape=circle,fill=black,scale=0.5] (a5) at  (1,1) {};
\node [draw, shape=circle,fill=black,scale=0.5] (a6) at  (2,1) {};
\node [draw, shape=circle,fill=black,scale=0.5] (a7) at  (1,2) {};
\draw(a1)--(a3)--(a6)--(a1)--(a4)--(a3)--(a6)--(a7)--(a4)--(a6);
\node at (0,-.4) {$u_1$};
\node at (1,-.4) {$t_1$};
\node at (2,-.4) {$\overline{u_1}$};
\node at (0,1.4) {$a_1$};
\node at (1,1.4) {$s_1$};
\node at (2,1.4) {$b_1$};
\node at (1,2.4) {$d_1$};

\node [draw, shape=circle,fill=black,scale=0.5] (a1) at  (3,0) {};
\node [draw, shape=circle,fill=black,scale=0.5] (a2) at  (4,0) {};
\node [draw, shape=circle,fill=black,scale=0.5] (a3) at  (5,0) {};
\node [draw, shape=circle,fill=black,scale=0.5] (a4) at  (3,1) {};
\node [draw, shape=circle,fill=black,scale=0.5] (a5) at  (4,1) {};
\node [draw, shape=circle,fill=black,scale=0.5] (a6) at  (5,1) {};
\node [draw, shape=circle,fill=black,scale=0.5] (a7) at  (4,2) {};
\draw(a1)--(a3)--(a6)--(a1)--(a4)--(a3)--(a6)--(a7)--(a4)--(a6);
\node at (3,-.4) {$u_2$};
\node at (4,-.4) {$t_2$};
\node at (5,-.4) {$\overline{u_2}$};
\node at (3,1.4) {$a_2$};
\node at (4,1.4) {$s_2$};
\node at (5,1.4) {$b_2$};
\node at (4,2.4) {$d_2$};

\node [draw, shape=circle,fill=black,scale=0.5] (a1) at  (6,0) {};
\node [draw, shape=circle,fill=black,scale=0.5] (a2) at  (7,0) {};
\node [draw, shape=circle,fill=black,scale=0.5] (a3) at  (8,0) {};
\node [draw, shape=circle,fill=black,scale=0.5] (a4) at  (6,1) {};
\node [draw, shape=circle,fill=black,scale=0.5] (a5) at  (7,1) {};
\node [draw, shape=circle,fill=black,scale=0.5] (a6) at  (8,1) {};
\node [draw, shape=circle,fill=black,scale=0.5] (a7) at  (7,2) {};
\draw(a1)--(a3)--(a6)--(a1)--(a4)--(a3)--(a6)--(a7)--(a4)--(a6);
\node at (6,-.4) {$u_3$};
\node at (7,-.4) {$t_3$};
\node at (8,-.4) {$\overline{u_3}$};
\node at (6,1.4) {$a_3$};
\node at (7,1.4) {$s_3$};
\node at (8,1.4) {$b_3$};
\node at (7,2.4) {$d_3$};

\node [draw, shape=circle,fill=black,scale=0.5] (a1) at  (9,0) {};
\node [draw, shape=circle,fill=black,scale=0.5] (a2) at  (10,0) {};
\node [draw, shape=circle,fill=black,scale=0.5] (a3) at  (11,0) {};
\node [draw, shape=circle,fill=black,scale=0.5] (a4) at  (9,1) {};
\node [draw, shape=circle,fill=black,scale=0.5] (a5) at  (10,1) {};
\node [draw, shape=circle,fill=black,scale=0.5] (a6) at  (11,1) {};
\node [draw, shape=circle,fill=black,scale=0.5] (a7) at  (10,2) {};
\draw(a1)--(a3)--(a6)--(a1)--(a4)--(a3)--(a6)--(a7)--(a4)--(a6);
\node at (9,-.4) {$u_4$};
\node at (10,-.4) {$t_4$};
\node at (11,-.4) {$\overline{u_4}$};
\node at (9,1.4) {$a_4$};
\node at (10,1.4) {$s_4$};
\node at (11,1.4) {$b_4$};
\node at (10,2.4) {$d_4$};

\node [draw, shape=circle,fill=black,scale=0.5] (a1) at  (4,-2) {};
\node [draw, shape=circle,fill=black,scale=0.5] (a2) at  (7,-2) {};
\node [draw, shape=circle,fill=black,scale=0.5] (a3) at  (0,0) {};
\node [draw, shape=circle,fill=black,scale=0.5] (a4) at  (3,0) {};
\node [draw, shape=circle,fill=black,scale=0.5] (a5) at  (8,0) {};
\node [draw, shape=circle,fill=black,scale=0.5] (a6) at  (11,0) {};
\node [draw, shape=circle,fill=black,scale=0.5] (a7) at  (5.5,-2) {};
\draw(a3)--(a1)--(a4)--(a2)--(a5)--(a1);
\draw(a6)--(a2)--(a7)--(a1);
\node at (4,-2.4) {$c_1$};
\node at (7,-2.4) {$c_2$};

\node [draw, shape=circle,fill=black,scale=0.5] (a1) at  (4.5,-3) {};
\node [draw, shape=circle,fill=black,scale=0.5] (a2) at  (5.5,-4) {};
\node [draw, shape=circle,fill=black,scale=0.5] (a3) at  (6.5,-3) {};
\node [draw, shape=circle,fill=black,scale=0.5] (a4) at  (5.5,-2) {};
\draw(a1)--(a3)--(a2)--(a1)--(a4)--(a3);
\node at (4.2,-3) {$q$};
\node at (6.8,-3) {$p$};
\node at (5.5,-4.3) {$o$};
\node at (5.5,-2.3) {$r$};
\end{tikzpicture}
\caption{An example of the graph $G$.}
\label{fig-2}
\end{figure}

Set $k=1$. Note that for every $\gamma _{tR}(G)$-function $f=(V_0, V_1, V_2)$, we
have $f(V(H_i))\geq 4$ for each $i\in \{1,2,\dots ,n\}$ and $f(V(F))\geq 3$. Therefore $\gamma _{tR}(G)\geq 4n+3$.\newline
\newline
\textbf{Claim 1.} $\gamma _{tR}(G)=4n+3$ if and only if $\mathcal{C}$ is satisfiable.\newline
\textit{Proof of Claim 1.} Assume that $\gamma _{tR}(G)=4n+3$ and let $f=(V_0, V_1, V_2)$ be a $\gamma _{tR}(G)$-function. Then $f(V(H_i))=4$ for each $i\in \{1,2,\dots ,n\}, f(V(F))=3$  and the value of $f$ on other vertices is zero. More precisely, $f(V(F))=3$ implies that $f(r)=1,f(o)=0$ and either ($f(p)=2$ and $f(q)=0$) or ($f(p)=0$ and $f(q)=2$). Moreover, $f(V(H_i))=4$ implies that $\left\vert V(H_i)\cap V_2\right\vert =2$ and $\left\vert V(H_i)\cap V_1\right\vert =0$ and thus we must have either $u_i,b_i\in V_2$ or
$\overline{u_i},a_i\in V_2$. Consequently, for each $i \in \{1, \dots , n\}$ we have $\left\vert \{u_i,\overline{u_i}\}\cap V_2\right\vert =1$. Now for every $j\in\{1, 2, \dots , m\}$, since $f(c_j)=0$ and $r\notin V_2$, we deduce that  $N(c_j)\cap \{u_i,\overline{u_i}\}\neq \emptyset $ for some $i\in \{1,2,\dots ,n\}$. Let $t:U\longrightarrow \{T,F\}$ be a mapping defined by $t(u_i)=T$ if $f(u_i)=2$ and $t(u_i)=F$ if $f(\overline{u_i})=2$.
Assume that $f(u_i)=2$ and $c_j\in N(u_i)$. By the construction of $G$, the literal $u_i$ is in the clause $C_j$. Then $t(u_i)=T$, implies that the clause $C_j$ is satisfied by $t$. Next assume that $f(\overline{u_i})=2$ and $c_j\in N(\overline{u_i})$. By the construction of $G$, the literal $\overline{u_i}$ is in the clause $C_j$. Then $t(u_i)=F$ and thus $t$ assigns $\overline{u_i}$ the true value $T$. Hence $t$ satisfies the clause $C_j$ and therefore $\mathcal{C}$ is satisfiable.

Conversely, assume that $\mathcal{C}$ is satisfiable, and let $t:U\longrightarrow \{T,F\}$ be a satisfying truth assignment for $\mathcal{C}$. We construct a TRDF $h$ on $G$ as follows. For every $i \in \{1, 2, \dots , n\}$ if $t(u_i)=T$, then let $h(u_i)=h(b_i)=2$, and if $t(u_i)=F$, then let $h(\overline{u_i})=h(a_i)=2$. Also, let $h(p)=2, h(r)=1$ and $h(x)=0$
for all remaining vertices. It is easy to check that $h$ is a TRDF on $G$ of weight $4n+3$. Thus $\gamma _{tR}(G)=4n+3$ and the proof of Claim 1 is complete.\newline
\newline
\textbf{Claim 2.} For every edge $e$ of $G$, $\gamma _{tR}(G-e)\leq 4n+4$.
\newline
\textit{Proof of Claim 2.} Let $e$ be an edge of $G$. If $e\in E(H_{i})$ for some $i\in \{1,2,\dots ,n\}$, define the function $g:V(G)\longrightarrow \{0,1,2\}$ by $g(r)=g(p)=2, g(u_\ell)=g(b_\ell)=2$ for every $\ell \in \{1,2,\dots ,n\}-\{i\}$.
Now if $e\in \{a_id_i, a_is_i, a_i\overline{u_i}, t_i\overline{u_i}\}$, then let $g(u_i)=g(b_i)=2$. If $e\in \{b_id_i, b_is_i, b_iu_i, t_iu_i\}$, then let $g(a_i)=g(\overline{u_i})=2$. If $e=a_iu_i$, then $g(\overline{u_i})=g(b_i)=2$ and eventually if $e=b_i\overline{u_i}$, then let $g(u_i)=g(a_i)=2$. (Of course notice that the cases $e\in \{b_id_i, a_id_i, a_iu_i, b_i\overline{u_i}\}$ can be included in other cases.)
Any remaining vertex of $G$ is assigned a $0$ under $g$. Clearly, $g$ is a TRDF on $G-e$ of weight $4n+4$. For the next, we assume that $e\notin E(H_{i})$ for every $i\in
\{1,2,\dots ,n\}$. If $e\in E(F)$, define the function $g:V(G)\longrightarrow \{0,1,2\}$ by $g(r)=g(u_{i})=g(b_{i})=2$ for every
$i\in \{1,2,\dots ,n\}$ and if $e\in \{rp, op\}$, then let $g(q)=2$ and else let $g(p)=2$. Also $g(x)=0$ for other vertices $x$ in $G$. Clearly, $g$ is a TRDF on $G-e$ of weight $4n+4$. Assume that
$e=rc_j$ for some $j\in \{1,2,\dots ,m\}$. Then since there exists $i\in \{1,2,\dots ,n\}$
such that $N(c_j)\cap \{u_i,\overline{u_i}\}\neq \emptyset$, without
loss of generality, let $c_ju_i\in E(G)$. Then assigning a $2$ to $r,p$
and every $u_i$ and $b_i$, and a $0$ to all remaining vertices provides
a TRDF on $G-e$ of weight $4n+4$. Finally, if $e$ is an edge linking
some $c_j$ with a variable of $U$, then the previous function is
TRDF on $G-e$ of weight $4n+4$. In any case, $\gamma _{tR}(G-e)\leq
4n+4$ and the proof of Claim 2 is complete.\newline
\newline
\textbf{Claim 3.} $\gamma _{tR}(G)=4n+3$ if and only if $b_{tR}(G)=1.$
\newline
\textit{Proof of Claim 3.} Assume that $\gamma _{tR}(G)=4n+3$. Let $f=(V_0, V_1, V_2)$ be a $\gamma _{tR}(G-e)$-function, where $e=pq$. Let $F'$ be the cycle $C_4$ induced by $o,p,q$ and $r$ in $G-e$.
Clearly, $f(V(H_i))\geq 4$ and so the total weight assigned for vertices
of all $H_i$'s equals to at least $4n$. Moreover, to total Roman dominate
vertices of $F'$, we need that $f(V(F'))+\overset{m}{\underset{j=1}{\sum }}f(c_j)\geq 4$ and thus $\gamma _{tR}(G-e)\geq 4n+4$. Therefore $b_{tR}(G)=1$.

Conversely, assume that $b_{tR}(G)=1$ and let $e$ be an edge of $G$ such
that $\gamma _{tR}(G-e)>\gamma _{tR}(G)$. By Claim 2, $\gamma _{tR}(G-e)\leq
4n+4$ and since $\gamma _{tR}(G)\geq 4n+3$ we deduce that $\gamma_{tR}(G)=4n+3$ which completes the proof of Claim 3.

According to Claims 1 and 3, we obtain that $b_{tR}(G)=1$ if and only if
there is a truth assignment for $U$ that satisfies all the clauses in $\mathcal{C}$. Since the construction of the total Roman bondage instance is straightforward from a 3-SAT instance, the size of the total Roman bondage
instance is bounded above by a polynomial function of the size of 3-SAT
instance. Consequently, we obtain a polynomial transformation.
\end{proof}

\section{ Results on total Roman bondage number}

In this section, we establish sharp bounds for the total Roman bondage number of a graph under various conditions and determine its exact value in specific cases.

\begin{lemma}
Let $G$ be a graph and $B$ a $b_{tR}(G)$-set. Then
$$\gamma_{tR}(G)+1 \leq \gamma_{tR}(G-B)\leq \gamma_{tR}(G)+2.$$
Both bounds are sharp.
\end{lemma}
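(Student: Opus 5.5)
The lower bound is immediate from the definitions: $B$ is a $b_{tR}(G)$-set, so $\gamma_{tR}(G-B)>\gamma_{tR}(G)$, and since these are integers, $\gamma_{tR}(G-B)\geq\gamma_{tR}(G)+1$. For the upper bound we use the minimality of $B$ together with Proposition~\ref{Pro}. Choose any edge $e\in B$ and put $B'=B\setminus\{e\}$. Then $G-B=(G-B')-e$, or equivalently $G-B'=(G-B)+e$, where $e$ is a non-edge of $G-B$.

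We first show that $G-B'$ has no isolated vertices, because $G-B'$ is obtained from $G-B$ by adding an edge. We also have $|B'|<b_{tR}(G)$. By the minimality of $b_{tR}(G)$, the set $B'$ cannot satisfy both conditions in the definition, so we must have $\gamma_{tR}(G-B')\leq\gamma_{tR}(G)$. (If $B=\{e\}$, then $B'=\emptyset$ and this reads $\gamma_{tR}(G)\leq\gamma_{tR}(G)$ trivially.) Now apply Proposition~\ref{Pro} to the graph $G-B$, which has no isolated vertices, and the edge $e\in E(\overline{G-B})$. This gives $\gamma_{tR}(G-B)-2\leq\gamma_{tR}(G-B')$. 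Combining the two inequalities, we get $\gamma_{tR}(G-B)\leq\gamma_{tR}(G)+2$.

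For sharpness we give explicit examples. The upper bound is attained by the reduction graph of Section~3 with a satisfiable $\mathcal{C}$, or more simply by the graph $F$ there ($K_4$ minus an edge). In $F$ the vertices $p,q$ have degree $n-1$, so $\gamma_{tR}(F)=3$ by Remarks~\ref{Remarks}(2). Removing $pq$ yields $C_4$, and $\gamma_{tR}(C_4)=4$ by Proposition~\ref{ahah1}; this only shows a gain of $1$, so it gives sharpness of the lower bound. For the upper bound we want a graph with $\gamma_{tR}=3$ and a single edge whose removal leaves $\gamma_{tR}=5$. A candidate is a graph with a unique vertex $v$ of degree $n-1$ (so $b_{tR}=1$ by Remarks~\ref{Remarks}(3)) where deleting one edge $vw$ forces weight $5$. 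One example is $K_{1,3}$ plus an edge $w x$ with a pendant path, designed so that $N[u]\cup N[v']\neq V$ for all adjacent pairs $u,v'$. Then Proposition~\ref{4R} rules out $\gamma_{tR}=4$, and we exhibit a TRDF of weight $5$.

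The main obstacle is constructing a clean example for the upper bound, since $G-B$ must have no isolated vertex. I would first try a "star with one leaf edge extended": a vertex $v$ adjacent to all others, together with one extra edge $xy$ among the leaves. Removing $vx$ leaves $x$ a leaf of $y$. We then check via Proposition~\ref{4R} that no adjacent pair covers $V$, so the result has $\gamma_{tR}=5$. If this check fails, I would attach a longer structure to $x$ and verify the covering condition directly.
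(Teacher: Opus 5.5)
Your proof of the two inequalities is correct and is exactly the paper's argument: apply the minimality of $B$ to $B\setminus\{e\}$, then apply Proposition~\ref{Pro} to $G-B$ and the non-edge $e$. You are slightly more careful than the paper, since you check that $(G-B)+e$ is isolate-free and use only $\gamma_{tR}(G-B')\le\gamma_{tR}(G)$. Your witness for sharpness of the lower bound is also valid. The diamond $F$ has two vertices of degree $n-1$, so $b_{tR}(F)=1$ by Remarks~\ref{Remarks}(3). Then $\{pq\}$ is a $b_{tR}(F)$-set with $\gamma_{tR}(F-pq)=\gamma_{tR}(C_4)=4=\gamma_{tR}(F)+1$. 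The paper uses $S_{1,q}$ here instead.

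The gap is sharpness of the upper bound, which the statement asserts and you never establish.

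\begin{itemize}
\item Your claim that the Section~3 graph attains it is false: Claim~2 there gives $\gamma_{tR}(G-e)\le 4n+4=\gamma_{tR}(G)+1$ for every edge $e$.
\item Your ``star with one leaf edge extended'' also fails. After deleting $vx$, the adjacent vertices $v,y$ satisfy $N[v]\cup N[y]=V$, so $\gamma_{tR}=4$ by Proposition~\ref{4R}, a gain of only $1$.
\item The strategy itself cannot work. Suppose $v$ is the unique vertex of degree $n-1$ and $G-vw$ is isolate-free. Then $w$ has a neighbour $u\neq v$, and $u$ is adjacent to $v$. So $N[v]\cup N[u]=V$, and assigning $2$ to $u,v$ gives $\gamma_{tR}(G-vw)\le 4$.
\item More generally, let $\gamma_{tR}(G)=3$ with $t$ vertices of degree $n-1$. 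Each of them must lose an edge of $B$, and $|B|=\lceil t/2\rceil$ by Remarks~\ref{Remarks}(3). So some such vertex $u$ loses exactly one edge $uy$. The same argument, using a neighbour $x\neq u$ of $y$ in $G-B$, gives $\gamma_{tR}(G-B)\le 4$. Hence no graph with $\gamma_{tR}(G)=3$ attains the upper bound.
\end{itemize}

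You need an example with $\gamma_{tR}(G)\ge 4$. The paper takes the bistar $S_{p,q}$ with $q\ge p\ge 2$.

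\begin{itemize}
\item Assigning $2$ to both centres gives $\gamma_{tR}(S_{p,q})=4$; it is not $3$ because $\Delta\le n-2$.
\item The central edge is the only edge whose removal leaves no isolated vertex. So $b_{tR}=1$, and $B$ is the central edge.
\item $S_{p,q}-B=S_p\cup S_q$ has $\gamma_{tR}=3+3=6=\gamma_{tR}(S_{p,q})+2$.
\end{itemize}
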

\begin{proof}
Since $B$ is a $b_{tR}(G)$-set, $\gamma_{tR}(G - B) > \gamma_{tR}(G)$, and thus $\gamma_{tR}(G - B) \geq \gamma_{tR}(G) + 1$.

For the upper bound, let $e \in B$. As $B$ is a $b_{tR}(G)$-set, $\gamma_{tR}((G - B) + e) = \gamma_{tR}(G)$. By Proposition \ref{Pro}, 
$$\gamma_{tR}(G - B) \leq \gamma_{tR}((G - B) + e) + 2 = \gamma_{tR}(G) + 2.$$

Sharpness of the upper bound holds for bistars $S_{p,q}$ with $q \geq p \geq 2$, while the lower bound is sharp for $S_{1,q}$ with $q \geq 2$.
\end{proof}

\begin{theorem}\label{m1}
Let $G$ be a  graph $\gamma _{tR}(G)=\gamma _{t}(G)+2$. Then $b_{tR}(G)\leq b_{t}(G)$.
\end{theorem}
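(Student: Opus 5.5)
The plan is to take a minimum total bondage set and show that it also raises the total Roman domination number. Let $E'$ be a $b_t(G)$-set. Then $G-E'$ has no isolated vertices, $|E'|=b_t(G)$ and $\gamma_t(G-E')\geq\gamma_t(G)+1$. If I can show $\gamma_{tR}(G-E')>\gamma_{tR}(G)=\gamma_t(G)+2$, then $E'$ witnesses $b_{tR}(G)\leq |E'|=b_t(G)$. (If $b_t(G)=\infty$ there is nothing to prove.)

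\textbf{Step 1: the generic inequality.} The tool is Proposition~\ref{AA}(4),(7),(8) applied to $H=G-E'$, together with additivity of $\gamma_t$ and $\gamma_{tR}$ over components. Each $K_2$ component $C$ has $\gamma_{tR}(C)=\gamma_t(C)$. Each connected component $C$ of order at least $3$ has $\gamma_{tR}(C)\geq\gamma_t(C)+1$, with equality iff $\Delta(C)=|V(C)|-1$. Hence $\gamma_{tR}(H)\geq\gamma_t(H)+2$ unless one of two exceptional cases occurs:
\begin{enumerate}
\item[(a)] $H$ is a disjoint union of $K_2$'s;
\item[(b)] $H$ has exactly one non-$K_2$ component $C$, and $C$ has a vertex adjacent to all other vertices of $C$.
\end{enumerate}
Outside (a) and (b) we get $\gamma_{tR}(G-E')\geq\gamma_t(G-E')+2\geq\gamma_t(G)+3>\gamma_{tR}(G)$, and we are done. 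Likewise, if $\gamma_t(G-E')\geq\gamma_t(G)+2$, then Proposition~\ref{AA}(4) alone, plus excluding (a), gives the claim. So we may assume $\gamma_t(G-E')=\gamma_t(G)+1$.

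\textbf{Step 2: case (a).} Here $\gamma_t(G-E')=n$, so $\gamma_t(G)=n-1$ and $\gamma_{tR}(G)=n+1$. This contradicts $\gamma_{tR}(G)\leq n$, since the constant function $1$ is a TRDF on a graph without isolated vertices.

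\textbf{Step 3: case (b), the main obstacle.} Say $H$ consists of $k$ copies of $K_2$ and a component $C$ with a full vertex $w$. Then $\gamma_t(H)=2k+2$ and $\gamma_{tR}(H)=2k+3$. So $\gamma_t(G)=2k+1$ and $\gamma_{tR}(G)=2k+3$, and there is no contradiction from the numbers alone. My first attempt is to exploit the structure.

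\begin{itemize}
\item A $\gamma_t(G)$-set $S$ of size $2k+1$ must still totally dominate each $K_2$ component $\{x,y\}$. Since it is smaller than in $H$, it must save a vertex somewhere, using an edge of $E'$.
\item Using $S$ together with $w$ (or an edge of $E'$ between the pieces), I would build a TRDF on $G$ of weight $2k+2$. One option is to give the value $2$ to $w$ and to a suitable neighbour, and the value $1$ to the relevant vertices of $S$ that handle the $K_2$'s.
\item This would contradict $\gamma_{tR}(G)=\gamma_t(G)+2$.
\end{itemize}

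If this direct construction fails, the fallback is to modify the choice of $E'$. Among all minimum total bondage sets, choose one minimizing the number of $K_2$ components of $G-E'$. Then show that an edge of $E'$ can be exchanged so as to leave case (b) without decreasing $\gamma_t$.

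I expect this case (b) analysis to be the only genuinely delicate part of the argument.
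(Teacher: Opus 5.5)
\textbf{The gap is your Step 3, and it cannot be closed: case (b) really occurs, and there the statement is false}, even under the paper's standing assumption $G\in\mathcal{B}$.

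\emph{The counterexample.} Let $C=K_5-zv$ on $\{w_1,w_2,w_3,z,v\}$. Obtain $G$ by adding vertices $x,y$ and edges $xy$, $yz$, so $n=7$.
\begin{enumerate}
\item[(i)] Every total dominating set contains $y$, and $N(y)\cup N(z)$ misses $v$, so $\gamma_t(G)=3$.
\item[(ii)] Since $\Delta(G)=4\le n-2$, Proposition~\ref{AA}(7),(8) give $\gamma_{tR}(G)\ge 5$. The function $f(y)=f(w_1)=2$, $f(z)=1$ shows $\gamma_{tR}(G)=5=\gamma_t(G)+2$.
\item[(iii)] Deleting $yz$ leaves $K_2\cup C$ with $\gamma_t=4$, so $b_t(G)=1$. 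This is exactly your case (b) with $k=1$.
\item[(iv)] Any edge $e$ of $C$ has at most two of $w_1,w_2,w_3$ as endpoints. So some $w_i$ is still adjacent to all of $V(C)\setminus\{w_i\}$ in $G-e$, and $f(y)=f(w_i)=2$, $f(z)=1$ is a TRDF of $G-e$ of weight $5$.
\item[(v)] Also $\gamma_{tR}(G-yz)=2+3=5$, and $xy$ cannot be deleted. Hence $b_{tR}(G)\ge 2>1=b_t(G)$.
\item[(vi)] Finally $G\in\mathcal{B}$: the Hamiltonian path $x,y,z,w_1,v,w_2,w_3$ is a spanning subgraph with $\gamma_{tR}=7$.
\end{enumerate}

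Both of your routes fail on this graph. No TRDF of weight $2k+2=4$ exists on $G$, so the direct construction has nothing to build. For the fallback, $\{y,z,w_i\}$ (with $w_i$ as in (iv)) still totally dominates $G-e$ for every edge $e$ of $C$. Hence $\{yz\}$ is the unique $b_t(G)$-set, and there is nothing to exchange.

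The paper's proof has the same hole. In its last case it applies Proposition~\ref{AA}(7), which is stated for connected graphs, to the possibly disconnected $G-B$. Here $G-B=K_2\cup C$ satisfies $2\le\Delta(G-B)\le n-2$, yet $\gamma_{tR}(G-B)=\gamma_t(G-B)+1$.

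So you located the flaw correctly. What your Steps 1 and 2 actually prove is a weaker statement: $b_{tR}(G)\le b_t(G)$ whenever some $b_t(G)$-set $E'$ leaves $G-E'$ not of the form $kK_2\cup C$ with $C$ having a vertex adjacent to all its other vertices.

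A minor point: your Step 2 does not cover case (a) when $\gamma_t(G-E')\ge\gamma_t(G)+2$. That case needs $\gamma_{tR}(G)<n$, i.e.\ the standing assumption $G\in\mathcal{B}$. The path $P_4$ shows the assumption is necessary: $\gamma_{tR}(P_4)=4=\gamma_t(P_4)+2$ and $b_t(P_4)=1$, but $b_{tR}(P_4)=\infty$.
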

\begin{proof}
Let $B$ be a $b_{t}(G)$-set. Suppose $\Delta (G-B)=n-1$. Then $\Delta (G)=n-1$ and so $\gamma _t(G)=2, \gamma _{tR}(G)=3$, contradicting $\gamma_{tR}(G) = \gamma_t(G) + 2$. 

If $\Delta(G - B) = 1$, then $G - B$ is a disjoint union of $K_2$'s, yielding $\gamma_{tR}(G - B) = n > \gamma_{tR}(G)$, so $b_{tR}(G) \leq |B| = b_t(G)$.

Now assume $2 \leq \Delta(G - B) \leq n - 2$. By Parts 4, 7, and 8 of Proposition \ref{AA},
$$\gamma_{tR}(G - B) \geq \gamma_t(G - B) + 2 > \gamma_t(G) + 2 = \gamma_{tR}(G).$$
Thus, $b_{tR}(G) \leq |B| = b_t(G)$.
\end{proof}

\begin{corollary}\label{ccc}
Let $G$ be a graph with $\gamma_{tR}(G) = 4$ (equivalently, $\Delta(G) \leq n-2$ and adjacent vertices $u,v$ exist with $N[u] \cup N[v] = V(G)$). Then $b_{tR}(G) = b_t(G)$.
\end{corollary}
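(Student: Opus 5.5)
We establish the two inequalities $b_{tR}(G)\leq b_t(G)$ and $b_t(G)\leq b_{tR}(G)$ separately.

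\textbf{Step 1: the value of $\gamma_t(G)$.} By Proposition~\ref{4R} (the equivalent description in the statement), there are adjacent vertices $u,v$ with $N[u]\cup N[v]=V(G)$. By Remarks~\ref{Remarks}(5), $\gamma_t(G)=2$. Hence $\gamma_{tR}(G)=4=\gamma_t(G)+2$. The same equality can also be read as $\gamma_{tR}(G)=2\gamma_t(G)$. Both hypotheses we need are therefore available.

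\textbf{Step 2: the inequality $b_{tR}(G)\leq b_t(G)$.} This is Theorem~\ref{m1} applied directly, since $\gamma_{tR}(G)=\gamma_t(G)+2$.

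\textbf{Step 3: the inequality $b_t(G)\leq b_{tR}(G)$.} This is Lemma~\ref{m2}(6), since $\gamma_{tR}(G)=2\gamma_t(G)$. To make sure this step is sound, we verify it directly. Let $B$ be a $b_{tR}(G)$-set. Then $G-B$ has no isolated vertices and $\gamma_{tR}(G-B)\geq 5$.

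Suppose, for contradiction, that $\gamma_t(G-B)=2$. Then some adjacent pair $u',v'$ satisfies $N[u']\cup N[v']=V$ in $G-B$. Assigning $2$ to $u'$ and $v'$ and $0$ to every other vertex gives a TRDF of weight $4$. This contradicts $\gamma_{tR}(G-B)\geq 5$.

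Hence $\gamma_t(G-B)>2=\gamma_t(G)$. Since $G-B$ has no isolated vertices, $B$ is a valid candidate for the total bondage number, so $b_t(G)\leq |B|$.

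\textbf{Expected obstacle.} The only delicate point is the definition of $b_t$. We must ensure it permits any edge set whose removal leaves no isolated vertices and increases $\gamma_t$. This is exactly the convention used in Lemma~\ref{m2}, so the argument should close as above. Combining Steps 2 and 3 gives $b_{tR}(G)=b_t(G)$.
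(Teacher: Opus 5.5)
Your proposal is correct and matches the paper's proof. Like the paper, you get $\gamma_t(G)=2$ from Proposition~\ref{4R} and Remarks~\ref{Remarks}(5), then combine Theorem~\ref{m1} with Lemma~\ref{m2}(6), and your direct check of the second inequality is a valid unpacking of Lemma~\ref{m2}(6). Step~2 inherits, through Theorem~\ref{m1}, the paper's standing assumption $G\in\mathcal{B}$, and this assumption is really needed: $P_4$ and $C_4$ satisfy the hypothesis yet have $b_{tR}=\infty$ while $b_t$ is finite.
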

\begin{proof}
By Proposition \ref{4R} and Remark \ref{Remarks}(5), $\gamma_{tR}(G) = 4$ and $\gamma_t(G) = 2$. Since $\gamma_{tR}(G) = \gamma_t(G) + 2 = 2\gamma_t(G)$, the result follows from Lemma \ref{m2}(6) and Theorem \ref{m1}.
\end{proof}

By \cite[Theorem 5]{kul}, $b_t(K_{m,n}) = m$ for $2 \leq m \leq n$. Thus, Corollary \ref{ccc} yields:

\begin{corollary}
For $2\leq m\leq n$, $b_{tR}(K_{m,n})=m$.
\end{corollary}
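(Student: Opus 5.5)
The plan is to apply Corollary~\ref{ccc} to $G=K_{m,n}$ with $2\le m\le n$ and then quote the known value $b_t(K_{m,n})=m$ from \cite[Theorem 5]{kul}. This requires two checks: that $\gamma_{tR}(K_{m,n})=4$, equivalently that the characterizing condition in Proposition~\ref{4R} holds, and that $K_{m,n}$ lies in $\mathcal{B}$, so that the bondage number is finite and not degenerate.

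For the first check, let the bipartition be $(X,Y)$ with $|X|=m\ge2$ and $|Y|=n\ge2$. The order is $m+n$, and every vertex has degree $m$ or $n$. Both are at most $m+n-2$, since $m,n\ge2$, so $\Delta\le (m+n)-2$. Now take any edge $xy$ with $x\in X$ and $y\in Y$. Then
\[
N[x]\cup N[y]=(Y\cup\{x\})\cup(X\cup\{y\})=V(K_{m,n}).
\]
So Proposition~\ref{4R} gives $\gamma_{tR}(K_{m,n})=4$. As a sanity check, the function assigning $2$ to $x$ and $y$ and $0$ elsewhere is a TRDF of weight $4$, and Remark~\ref{Remarks}(2) excludes weight $3$.

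For the second check, $K_{m,n}$ with $m,n\ge2$ is not a star, spider, path, or corona, except in the case $K_{2,2}=C_4$. That case is a cycle, so Corollary~\ref{infty} gives $b_{tR}(C_4)=\infty$, while $b_t(C_4)$ would need separate scrutiny. I expect this boundary case to be the main obstacle. I would first verify whether \cite[Theorem 5]{kul} is stated to include $m=n=2$. Independently, I would compute directly that deleting $2$ edges of $C_4$ either isolates a vertex or leaves $2K_2$, which has $\gamma_{tR}=4$. So $\gamma_{tR}$ never increases, and $b_t(C_4)$ should then also be $\infty$ under the same isolated-vertex convention (the remaining graph $2K_2$ has $\gamma_t=2$). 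Under that convention the identity $b_{tR}=b_t$ from Corollary~\ref{ccc} still holds formally. If instead \cite{kul} reports $b_t(C_4)=2$, the statement would need $n\ge3$, and I would flag this case separately.

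For $(m,n)\ne(2,2)$, Corollary~\ref{ccc} yields $b_{tR}(K_{m,n})=b_t(K_{m,n})=m$. If a self-contained confirmation is wanted, the upper bound comes from deleting the $m$ edges at a vertex $y\in Y$ after an appropriate rearrangement. More precisely, I would delete edges so that no edge $xy$ any longer has $N[x]\cup N[y]=V$. The lower bound follows because deleting fewer than $m$ edges leaves some vertex of $Y$ still adjacent to all of $X$, together with some $x\in X$ adjacent to it that dominates the remaining vertices of $Y$; this dominating pair keeps $\gamma_{tR}=4$.
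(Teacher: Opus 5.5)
Your main line (establish $\gamma_{tR}(K_{m,n})=4$ via Proposition~\ref{4R}, then apply Corollary~\ref{ccc} with Kulli and Patwari's value $b_t(K_{m,n})=m$) is exactly the paper's proof. It is sound for $n\geq 3$, and your lower-bound sketch is also fine.

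Your treatment of $K_{2,2}=C_4$ contains an error. In fact $\gamma_t(2K_2)=4$, not $2$, because both vertices of each $K_2$ component must lie in every total dominating set. So deleting two opposite edges of $C_4$ raises $\gamma_t$ from $2$ to $4$, and $b_t(C_4)=2$, in agreement with Kulli and Patwari. Combined with your correct computation $b_{tR}(C_4)=\infty$, this shows that Corollary~\ref{ccc} genuinely fails for $C_4$. The failing step is in Theorem~\ref{m1}: the claim $\gamma_{tR}(G-B)=n>\gamma_{tR}(G)$ when $G-B$ is a matching requires order greater than $4$. More to the point, no convention about $b_t$ can rescue the statement at $m=n=2$. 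Since $b_{tR}(K_{2,2})=\infty\neq 2$, the restriction $n\geq 3$ is needed unconditionally, not only ``if Kulli reports $b_t(C_4)=2$''. The paper excludes this case only implicitly, through its standing assumption that all graphs lie in $\mathcal{B}$.

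Your direct upper-bound sketch also needs repair. Deleting the $m$ edges at a single $y\in Y$ isolates $y$, which is forbidden. Delete instead a matching that saturates $X$, which exists since $m\leq n$.
\begin{itemize}
\item No vertex becomes isolated, because $m,n\geq 2$.
\item No vertex of $X$ remains adjacent to all of $Y$.
\item In a spanning subgraph of $K_{m,n}$, an edge $xy$ with $N[x]\cup N[y]=V$ forces $N(x)=Y$ and $N(y)=X$. So no such adjacent dominating pair survives.
\item For order at least $5$, the resulting graph is not $2K_2$ and has $\Delta\leq |V|-2$. Proposition~\ref{4R} and Remark~\ref{Remarks}(2) then give $\gamma_{tR}\geq 5$.
\end{itemize}
On $C_4$ the same deletion yields $2K_2$ with $\gamma_{tR}=4$, which is exactly the exceptional case.
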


\begin{proposition}
Let $G$ be a graph with $\gamma_{tR}(G)=3 \beta(G)$. Then $b_{tR}(G) \geq \max \{\delta(G), b(G)\}$.
\end{proposition}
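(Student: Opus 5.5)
The plan is to compare $\gamma_{tR}$ with $3\beta$ both in $G$ and in every admissible edge-deleted subgraph $G-E'$. I expect this to give $b_{tR}(G)=\infty$, which is stronger than the stated inequality.

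\emph{Step 1 (the lower bound by $b(G)$ directly).} By Proposition~\ref{AA}(1),(5),
$$3\beta(G)=\gamma_{tR}(G)\le 3\gamma(G)\le 3\beta(G),$$
so $\gamma(G)=\beta(G)$ and $\gamma_{tR}(G)=3\gamma(G)$. Lemma~\ref{m2}(8) then gives $b(G)\le b_{tR}(G)$. This already handles one term of the maximum.

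\emph{Step 2 (the key step).} Let $E'\subseteq E(G)$ be any set such that $G-E'$ has no isolated vertices. Every vertex cover of $G$ still covers all edges of $G-E'$, so $\beta(G-E')\le\beta(G)$. Since $G-E'$ has no isolated vertices, Proposition~\ref{AA}(5) and (1) apply to it, and we obtain
$$\gamma_{tR}(G-E')\le 3\gamma(G-E')\le 3\beta(G-E')\le 3\beta(G)=\gamma_{tR}(G).$$
Thus no admissible edge set raises $\gamma_{tR}$. By the definition of the total Roman bondage number, $b_{tR}(G)=\infty$, and therefore $b_{tR}(G)\ge\max\{\delta(G),b(G)\}$ holds trivially.

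\emph{Main obstacle.} The real content is the monotonicity $\beta(G-E')\le\beta(G)$ together with checking that Proposition~\ref{AA}(5) is applicable, which needs $G-E'$ to have no isolated vertices. That requirement is built into the definition of $b_{tR}$, so it holds.

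Step 2 implies that the hypothesis $\gamma_{tR}(G)=3\beta(G)$ forces $G\notin\mathcal{B}$. The proposition is therefore vacuous within $\mathcal{B}$ but still true as stated. I would present Step 1 anyway, because it is the argument that parallels the earlier results. If the authors intended a non-vacuous bound $b_{tR}(G)\ge\delta(G)$, the fallback would be the usual argument: removing fewer than $\delta$ edges leaves every vertex with a neighbour, and a $\gamma_{tR}$-function can be rebuilt from a minimum vertex cover. Step 2 makes that unnecessary.
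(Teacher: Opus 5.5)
Your proof is correct, and its core is the same chain of inequalities the paper uses, $\gamma_{tR}(G-E')\le 3\gamma(G-E')\le 3\beta(G-E')\le 3\beta(G)=\gamma_{tR}(G)$. Your Step~1 also matches the paper's handling of the $b(G)$ term via Lemma~\ref{m2}(8).

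The difference is in scope. The paper applies the chain only to edge sets $B$ with $|B|\le\delta(G)-1$. It uses that size restriction only to guarantee that $G-B$ is isolate-free, and so it concludes just $b_{tR}(G)\ge\delta(G)$. You observe that isolate-freeness is already part of the definition of $b_{tR}$. The chain therefore applies to every admissible $E'$ and gives $b_{tR}(G)=\infty$, which makes both terms of the maximum, and your Step~1, redundant.

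Your version is strictly sharper and more informative. It shows that the hypothesis $\gamma_{tR}(G)=3\beta(G)$ is incompatible with the standing assumption $G\in\mathcal{B}$, so the proposition is vacuous there. You can even identify these graphs. Put $2$ on a minimum vertex cover $C$, and $1$ on one outside neighbour of each vertex that is isolated in $G[C]$. This TRDF has weight less than $3\beta(G)$ unless $C$ is independent, every vertex outside $C$ has exactly one neighbour in $C$, and there are no $K_2$ components. Hence $G$ is a disjoint union of stars $K_{1,t}$ with $t\ge 2$, in agreement with Corollary~\ref{infty}.

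The paper's phrasing gains nothing logically beyond fitting the pattern of the surrounding bounds. Your statement, $b_{tR}(G)=\infty$, is what the argument actually proves.
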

\begin{proof}
By Parts 1 and 5 of Proposition \ref{AA}, $\gamma_{tR}(G) \leq 3\gamma(G) \leq 3\beta(G)$, so equality implies $\gamma_{tR}(G) = 3\gamma(G)$. Thus $b_{tR}(G) \geq b(G)$ by Lemma \ref{m2}(8).

If $\delta(G) = 1$, the result holds. Assume $\delta(G) \geq 2$ and let $B \subseteq E(G)$ with $|B| \leq \delta(G) - 1$. Then $\delta(G - B) \geq 1$. Again by Parts 1 and 5 of Proposition \ref{AA},
$$\gamma_{tR}(G) \leq \gamma_{tR}(G - B) \leq 3\gamma(G - B) \leq 3\beta(G - B) \leq 3\beta(G) = \gamma_{tR}(G),$$
so $\gamma_{tR}(G - B) = \gamma_{tR}(G)$. Hence, $b_{tR}(G) \geq \delta(G)$.
\end{proof}

\begin{lemma}\label{bt1}
\begin{enumerate}
\item Let $R$ be one of the following graphs:
 \begin{itemize}
\item[$\bullet$] A star $S_r$ with center $v$ and $r \geq 2$ leaves;
\item[$\bullet$] A spider $S(0,r)$ with head $v$ and $r \geq 2$ feet;
\item[$\bullet$] A cycle $C_n$ ($n \geq 5$) with vertex $v \in V(C_n)$;
\item[$\bullet$] A path $P_n$ ($n \geq 5$) with support vertex $v$.
 \end{itemize}
Let $H$ be a graph with $V(H) \cap V(R) = \emptyset$ and $u \in V(H)$ such that some $\gamma_{tR}(H)$-function $f$ satisfies $f(u) > 0$ ($f(u) = 2$ if $R = S(0,r)$). Let $G$ arise from $H \cup R$ by adding $s$ edges including $uv$. Then $b_{tR}(G) \leq s$.

\item If $G$ has adjacent support vertices $u,v$ with $v$ having $r \geq 2$ leaf neighbors, then $b_{tR}(G) \leq \deg(v) - r$.
\end{enumerate}
\end{lemma}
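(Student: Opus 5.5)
The plan for part~(1) is to take $B$ to be the set of $s$ added edges, so that $G-B=H\cup R$ is the disjoint union. Then $G-B$ has no isolated vertices, since $H$ has a $\gamma_{tR}$-function and $R$ has none. By additivity over components,
$$\gamma_{tR}(G-B)=\gamma_{tR}(H)+\gamma_{tR}(R).$$
It therefore suffices to exhibit a TRDF on $G$ of weight at most $\gamma_{tR}(H)+\gamma_{tR}(R)-1$. By Proposition~\ref{ahah1}, $\gamma_{tR}(R)=|V(R)|$ in all four cases. In each case I will keep $f$ on $V(H)$ and redefine the values on $V(R)$, using the edge $uv$ and $f(u)>0$ so that $v$ is never isolated in the positive subgraph and, when $f(u)=2$, $v$ may receive $0$. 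Every other edge of $G$ between $H$ and $R$ can only help.

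The constructions on $R$ are as follows.
\begin{itemize}
\item For the star $S_r$, put $v\mapsto 2$ and each leaf $\mapsto 0$. This has weight $2<r+1$, and $v$ is non-isolated through $u$.
\item For the spider $S(0,r)$, put $v\mapsto 0$, which is dominated by $u$ since $f(u)=2$, and give every subdivision vertex and every foot the value $1$. This has weight $2r<2r+1$, and each pair is a $K_2$ in the positive subgraph.
\item For $C_n$ with $n\ge5$, put $v\mapsto 2$ and its two cycle-neighbours $\mapsto 0$. The remaining path has $n-3\ge 2$ vertices, and I give all of them the value $1$. The total weight is $n-1$, and the path has no isolated vertex because $n-3\ge2$.
\item For $P_n$ with $n\ge5$ and support vertex $v$ with leaf $\ell$ and other neighbour $w$, put $v\mapsto 2$, $\ell,w\mapsto 0$, and $1$ on the remaining path of $n-3\ge 2$ vertices. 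Again the weight is $n-1$.
\end{itemize}
In every case the weight on $R$ drops by at least one, so $\gamma_{tR}(G)<\gamma_{tR}(G-B)$ and $b_{tR}(G)\le s$.

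For part~(2), the plan is to reduce to part~(1) with $R$ the star consisting of $v$ and its $r\ge 2$ leaves. Let $H=G-V(R)$, and let the added edges be the $\deg(v)-r$ edges from $v$ to its non-leaf neighbours, one of which is $uv$. I need to check that $H$ has no isolated vertices. Each non-leaf neighbour $w$ of $v$ has $\deg(w)\ge2$, so it keeps a neighbour after losing $vw$. A vertex at distance at least $2$ from $v$ loses nothing. Moreover, $u$ keeps its own leaf, which is not $v$ since $\deg(v)\ge r+1\ge3$. Hence $u$ is a support vertex of $H$, and by Remark~\ref{Remarks}(1) every $\gamma_{tR}(H)$-function $f$ satisfies $f(u)\ge1$. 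Part~(1) then gives $b_{tR}(G)\le \deg(v)-r$.

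The main point needing care is the path/cycle case, where the naive bound fails because $\gamma_{tR}$ of a path equals its order. The trick there is to place the $2$ at $v$, which is supported by $u$, so that two neighbours of $v$ can be set to $0$. This requires $n\ge5$, so that the leftover path has at least two vertices and is not isolated. I will also double-check that the remaining $s-1$ edges between $H$ and $R$ cannot break these TRDFs. They cannot, since adding edges never destroys a TRDF.
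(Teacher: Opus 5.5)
Your proposal is correct and follows the paper's proof. You use the same modified functions on $R$ extending $f$ on $H$: value $2$ at $v$ for the star, cycle and path, and $0$ at $v$ with $1$ elsewhere for the spider. You also make the same reduction of (2) to (1) with $R$ the star on $v$ and its leaves, and you additionally check, which the paper omits, that $H$ is isolate-free and that $u$ remains a support vertex of $H$.

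One slip: the claim $\gamma_{tR}(R)=|V(R)|$ is false for the star $S_r$ when $r\ge 3$, since such a star is not among the graphs of Proposition~\ref{ahah1} and in fact $\gamma_{tR}(S_r)=3$. The comparison should therefore be $2<3$ rather than $2<r+1$; the conclusion is unaffected.
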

\begin{proof}
(1) Note that $\gamma_{tR}(S_r) = 3$, $\gamma_{tR}(S(0,r)) = 2r + 1$, $\gamma_{tR}(C_n) = n$, $\gamma_{tR}(P_n) = n$, and $\gamma_{tR}(H \cup R) = \gamma_{tR}(H) + \gamma_{tR}(R)$. Since $f(u) > 0$, define TRDFs on $G$ extending $f$ as follows:

For $R = S_r$:
$$g_1(v) = 2, \quad g_1(x) = 0 \ (x \in L(v)), \quad g_1(x) = f(x) \ (x \in V(H)).$$

For $R = S(0,r)$:
$$g_2(v) = 0, \quad g_2(x) = 1 \ (x \in V(R) \setminus \{v\}), \quad g_2(x) = f(x) \ (x \in V(H)).$$

For $R = C_n$:
$$g_3(v) = 2, \ g_3(x) = 0 \ (x \in N_R(v)), \ g_3(x) = 1 \ (x \in V(R) \setminus N_R[v]), \  g_3(x) = f(x) \ (x \in V(H)).$$

For $R = P_n$ (analogous to $C_n$).

Each $g_i$ is a TRDF on $G$ with weight $< \gamma_{tR}(H \cup R)$, so $\gamma_{tR}(G) < \gamma_{tR}(H \cup R)$ and $b_{tR}(G) \leq s$.

(2) Let $H = G \setminus (L(v) \cup \{v\})$. Apply Remark \ref{Remarks}(1) and Part 1.
\end{proof}

With the aid of the subsequent theorem, the total Roman bondage number can be computed for a broad class of trees. To this end, recall that trees can be rooted at a vertex $r$.  For $x\in V(T)$ the length of the longest upward path from $x$ to a leaf is called the height of $x$.  

\begin{theorem}
Let $T$ be a rooted tree with root $r$, and let $x_1, \dots, x_t$ be vertices with height one. For each $i$, let $v_i$ be the non-leaf neighbour of $x_i$, with $v_i$ having $k_i$ leaves and $x_i$ having $r_i$ leaves.
\begin{enumerate}
\item If $k_i \geq 1$ and $r_i \geq 2$ for some $i$, then $b_{tR}(T) = 1$.
\item If $k_i \geq 2$ and $r_i = 1$ for some $i$, then $b_{tR}(T) \leq \deg(v_i) - k_i$.
\item For integers $t \geq 3$, $2 \leq k \leq t-1$, $b_{tR}(S(k,t)) = t - k$.
\end{enumerate}
\end{theorem}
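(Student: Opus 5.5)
Parts (1) and (2) follow from Lemma~\ref{bt1}(2), and Part (3) combines the upper bound from Part (2) with a direct count of $\gamma_{tR}$ over all edge sets that do not isolate a vertex.

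\textbf{Part (1).} Since $x_i$ has height one, every neighbor of $x_i$ other than $v_i$ is a leaf. Hence $\deg(x_i)=r_i+1$. By hypothesis $x_i$ and $v_i$ are adjacent support vertices ($k_i\ge 1$), and $x_i$ has $r_i\ge 2$ leaf neighbors. Applying Lemma~\ref{bt1}(2) with $v=x_i$ and $u=v_i$ gives $b_{tR}(T)\le \deg(x_i)-r_i=1$. As we assume $T\in\mathcal B$, $b_{tR}(T)$ is finite and positive, so $b_{tR}(T)=1$.

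\textbf{Part (2).} Here $x_i$ is a support vertex ($r_i=1$) adjacent to $v_i$, and $v_i$ has $k_i\ge 2$ leaf neighbors. Lemma~\ref{bt1}(2) with $v=v_i$ and $u=x_i$ gives $b_{tR}(T)\le \deg(v_i)-k_i$ directly.

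\textbf{Part (3), upper bound.} Write $h$ for the head of $S(k,t)$. It has $k\ge 2$ wounded feet (leaves) and $t-k\ge 1$ legs $h y_j z_j$, where $z_j$ is a foot. Rooting at a foot of some leg, the middle vertex $y_j$ of another leg has height one with $r=1$, and its non-leaf neighbor $h$ carries $k\ge 2$ leaves. (If $t-k=1$, root at a wounded foot instead; then the unique $y_1$ plays this role.) Part (2) gives $b_{tR}(S(k,t))\le \deg(h)-k=t-k$.

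\textbf{Part (3), lower bound.} First compute $\gamma_{tR}(S(k,s))=2s+2$ for every $s\ge 1$ legs and $k\ge 2$ leaves at the head.
\begin{itemize}
\item Each $y_j$ is a support vertex, so $f(y_j)\ge1$ by Remark~\ref{Remarks}(1). Also $f(z_j)=0$ forces $f(y_j)=2$, so each leg carries weight at least $2$.
\item If $f(h)\le 1$, each leaf at $h$ is positive. These leaves also need $h$ positive, so the star part carries at least $k+1\ge 3>2$.
\item Otherwise $f(h)=2$. Assigning $2$ to $h$ and $1$ to each $y_j,z_j$ attains the bound $2s+2$.
\end{itemize}

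Now let $E'$ be an edge set with $|E'|<t-k$ such that $S(k,t)-E'$ has no isolated vertex. Every edge incident to a leaf or a foot is forbidden, so $E'$ consists of $j<t-k$ edges $hy_\ell$. Then $S(k,t)-E'$ is the disjoint union of $j$ copies of $K_2$ and the spider $S(k,t-j)$, which still has $t-k-j\ge 1$ legs. Its weight is $2j+2(t-k-j)+2=2(t-k)+2=\gamma_{tR}(S(k,t))$, using additivity over components. Hence no such $E'$ increases $\gamma_{tR}$, and $b_{tR}(S(k,t))\ge t-k$.

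For completeness, removing all $t-k$ edges $hy_\ell$ leaves $t-k$ copies of $K_2$ and the star $S_k$. This has weight $2(t-k)+3$, so equality holds.

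The main point needing care is the exact formula for $\gamma_{tR}(S(k,s))$, in particular that the head must receive $2$ because $k\ge2$. Everything else is bookkeeping, together with the observation that only head–middle edges can be deleted.
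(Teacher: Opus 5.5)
Your proof is correct and follows the paper's route. Lemma~\ref{bt1}(2) handles parts (1), (2) and the upper bound in (3). For the lower bound in (3), you observe that only head--middle edges can be deleted without isolating a vertex, and such deletions leave $\gamma_{tR}=2(t-k)+2$ unchanged; you also supply the computation of $\gamma_{tR}(S(k,s))$ that the paper leaves implicit. The only difference is cosmetic: the paper roots $S(k,t)$ at the head, which immediately makes every middle vertex a height-one vertex with $r_i=1$ and $v_i=h$, so your case split on $t-k$ is unnecessary.
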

\begin{proof}
(1) Neighbors of $x_i$ except $v_i$ are leaves, so Lemma \ref{bt1}(2) applies.

(2) Direct from Lemma \ref{bt1}(2).

(3) Root $S(k,t)$ at its head. Part 2 gives $b_{tR}(S(k,t))\leq t - k$. Removing fewer than $t - k$ non-leaf-incident edges from the head preserves $\gamma_{tR}$.
\end{proof}

\begin{theorem}\label{3}
If $\gamma _{tR}(G)=4$, then $b_{tR}(G)\leq n-1$.
\end{theorem}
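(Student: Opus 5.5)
Since $\gamma_{tR}(G)=4$, Proposition~\ref{4R} tells us that either $G=2K_2$ or $\Delta(G)\le n-2$ and there are adjacent vertices $u,v$ with $N[u]\cup N[v]=V(G)$. The case $G=2K_2$ is excluded because $G\in\mathcal{B}$: every component of $2K_2$ is a path, so $b_{tR}=\infty$ by Corollary~\ref{infty}. We may therefore assume the second alternative. The plan is to delete a set of at most $n-1$ edges, chosen around $u$, so that no vertex becomes isolated but the resulting graph no longer satisfies the characterization of Proposition~\ref{4R}. Then Remarks~\ref{Remarks}(2) forces $\gamma_{tR}>4$, provided the maximum degree stays at most $n-2$ (it cannot increase under edge deletion, so it does).

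\emph{Construction.} Since $\Delta(G)\le n-2$, $u$ has some non-neighbour and $\deg(u)\le n-2$. I would delete all edges at $u$ except one, say keep $uv$. This removes at most $\deg(u)-1\le n-3$ edges. The difficulty is that some neighbours of $u$ might become isolated; in that case keep instead, for each such vertex, one edge to another vertex, which is only possible if that vertex has another neighbour. A cleaner alternative is to choose a leaf-like structure directly. Pick a vertex $w$ of minimum degree $\delta\le n-2$ and delete all but one of its edges, keeping $wx$, where $x$ has degree at least $2$ if possible. Then $w$ becomes a leaf, costing at most $n-3$ edges. Next, if necessary, delete further edges at $x$ to destroy any dominating adjacent pair. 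The total count should stay below $n-1$ provided we reuse deleted edges.

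\emph{Verifying $\gamma_{tR}$ increases.} This is the main obstacle. After the deletion we must check that no adjacent pair $u',v'$ in $G-B$ satisfies $N[u']\cup N[v']=V$. The natural route is to make $G-B$ contain two disjoint $P_3$-type structures, or to make it a graph from Proposition~\ref{ahah1} with $n\ge5$ vertices, for instance a path or a corona, where $\gamma_{tR}=n\ge5$.

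Concretely, I would aim to delete edges until $G-B$ is a spanning tree that is a path or has diameter at least $4$. Removing edges from $G$ down to a spanning tree (assuming $G$ is connected; note that $\gamma_{tR}=4$ with $\Delta\le n-2$ gives $\gamma_t=2$, hence connectivity) deletes $m-n+1$ edges, which may exceed $n-1$. So instead I would pick a spanning subgraph with no dominating adjacent pair by deleting only edges incident to $u$ and $v$. There are at most $\deg(u)+\deg(v)-1\le 2n-5$ of these, too many in general, so the argument needs care. The bound $n-1$ suggests deleting at most one edge per vertex, or the edges of a star. I would try deleting all edges at $v$ except $uv$, making $v$ a leaf attached to $u$, and then handle the case where $u$ alone almost dominates. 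I expect this casework to be where the real work lies. Sharpness of the count and avoiding isolated vertices, using that $N(u)\setminus\{v\}$ vertices keep their edge to $u$, should close the argument.
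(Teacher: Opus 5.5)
\emph{There is a genuine gap.} Your proposal does not contain a proof, because no edge set $B$ with $|B|\le n-1$ is ever fixed and verified. You try three candidates: stripping $u$ down to $uv$, turning a minimum-degree vertex into a leaf, and stripping $v$. Each is abandoned at exactly the two checks that carry the content: that $G-B$ stays isolate-free, and that no adjacent pair $x,y$ with $N[x]\cup N[y]=V$ survives. The decisive step is then left to unspecified casework.

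Your reduction itself is sound. Since $\Delta(G-B)\le n-2$, Proposition~\ref{4R} shows that $\gamma_{tR}(G-B)>4$ if and only if $G-B$ has no such pair. But that is where the real work begins. The paper does no construction at all. From $\gamma(G)=\gamma_t(G)=2$ it chains
\[
b_{tR}(G)=b_t(G)\le b(G)\le(\gamma(G)-1)\Delta(G)+1\le n-1,
\]
using Corollary~\ref{ccc}, Lemma~\ref{m2}(1) and the classical bondage bound.

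Be aware, though, that the missing casework cannot be supplied, because the bound is false. Correspondingly, the paper's step $b_t(G)\le b(G)$ is invalid: a minimum bondage set for $\gamma$ may isolate vertices, which $b_t$ and $b_{tR}$ forbid.

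Take $G=K_{2,2,2,2,2}$ ($n=10$), the complement of a perfect matching $M$. Here $\Delta(G)=8$ and every edge is dominating, so $\gamma_{tR}(G)=4$. For $B\subseteq E(G)$, the complement of $G-B$ is $F=M\cup B$. Then $G-B$ is isolate-free iff $\Delta(F)\le n-2$, and $G-B$ has no dominating edge iff any two non-adjacent vertices of $F$ have a common $F$-neighbour.

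Such an $F$ has $\delta(F)\ge 2$, since the neighbour of an $F$-leaf would have $F$-degree $n-1$. If $\delta(F)\ge 3$, then $|E(F)|\ge 15$. Otherwise let $N_F(u)=\{a,b\}$, and put $W=V\setminus\{u,a,b\}$ and $C=W\cap N_F(a)\cap N_F(b)$.
\begin{itemize}
\item Every vertex of $W$ sees $a$ or $b$, so $|E(F)|\ge n-1+|C|+|E(F[W])|$.
\item If some vertices of $W\setminus C$ see only $a$ and some see only $b$, then all of $W\setminus C$ lies in one component of $F[W]$.
\item If instead all of them see, say, only $a$, then $ab\notin E(F)$ because $\deg_F(a)\le n-2$, so each has a neighbour in $C$.
\end{itemize}
Either way $|E(F[W])|\ge n-4-|C|$, whence $|E(F)|\ge 2n-5=15$.

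So $|B|=|E(F)|-5\ge 10>n-1$, i.e.\ $b_{tR}(K_{2,2,2,2,2})\ge 10$. Equality holds when $F$ is the Petersen graph with $M$ one of its perfect matchings. By contrast, deleting the $8$ edges at a vertex $x$ and one edge at $x'$ already gives $\gamma=3$, so $b(G)\le 9$, but only at the price of isolating $x$.
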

\begin{proof}
By Proposition \ref{4R}, $\Delta(G) \leq n-2$ and adjacent $u,v$ exist with $N[u] \cup N[v] = V(G)$, so $\gamma(G) = \gamma_t(G) = 2$. Then $b_t(G) \leq b(G)$ by Lemma \ref{m2}(1). By \cite[Theorem 7]{refrefref}, for $H$ with $\gamma(H) \geq 2$, $b(H) \leq (\gamma(H) - 1)\Delta(H) + 1$. With Corollary \ref{ccc},
$$b_{tR}(G) = b_t(G) \leq b(G) \leq \Delta(G) + 1 \leq n - 1.$$
\end{proof}

\begin{lemma}\label{sub}
If $H$ is a spanning subgraph of $G$ obtained by removing $k$ edges with $\gamma_{tR}(G) = \gamma_{tR}(H)$, then
$$b_{tR}(H) \leq b_{tR}(G) \leq b_{tR}(H) + k.$$
\end{lemma}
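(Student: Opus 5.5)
Both inequalities come from comparing edge sets directly, using $E(H)=E(G)\setminus K$ with $|K|=k$ and the hypothesis $\gamma_{tR}(H)=\gamma_{tR}(G)$. Throughout I assume $b_{tR}(H)<\infty$ (otherwise the upper bound is vacuous). For the lower bound I assume $b_{tR}(G)<\infty$, which holds since all graphs considered belong to $\mathcal{B}$.

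\emph{Upper bound.} Let $B_H$ be a $b_{tR}(H)$-set. Put $E'=K\cup B_H\subseteq E(G)$, so $|E'|\le k+b_{tR}(H)$. Then $G-E'=H-B_H$. By the choice of $B_H$, this graph has no isolated vertices, and
\[
\gamma_{tR}(G-E')=\gamma_{tR}(H-B_H)>\gamma_{tR}(H)=\gamma_{tR}(G).
\]
Hence $E'$ is admissible for $G$, and $b_{tR}(G)\le b_{tR}(H)+k$.

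\emph{Lower bound.} Let $B$ be a $b_{tR}(G)$-set and set $B'=B\cap E(H)=B\setminus K$, so $|B'|\le b_{tR}(G)$. We have
\[
H-B'=G-(B\cup K)=(G-B)-(K\setminus B).
\]
So $H-B'$ is obtained from $G-B$ by deleting further edges. The argument then has two steps.

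First, I would show that $H-B'$ has no isolated vertices. This is the main obstacle: deleting the extra edges $K\setminus B$ from $G-B$ could in principle isolate a vertex. To handle it, I would first note that $H$ itself has no isolated vertices; this is implicit, since $\gamma_{tR}(H)$ must be defined. I would then argue minimality-style. If some vertex $w$ were isolated in $H-B'$, every $H$-edge at $w$ lies in $B'\subseteq B$. Deleting $B$ already removed all of these, and $w$ survives in $G-B$ only through edges of $K$. In that case I would replace $B$ by $B\setminus\{e\}$ for an edge $e$ at $w$, or otherwise modify $B$, and use the minimality of $|B|$ to reach a contradiction. If this cannot be closed off, the fallback is to read the lemma as requiring that $H-B'$ has no isolated vertices, which is what the paper's usage needs.

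Second, I would use monotonicity. Removing edges while creating no isolated vertex never decreases $\gamma_{tR}$, because a TRDF of the smaller graph is a TRDF of the larger graph: domination and totality are preserved when edges are added. This gives
\[
\gamma_{tR}(H-B')\ge\gamma_{tR}(G-B)>\gamma_{tR}(G)=\gamma_{tR}(H).
\]
Therefore $B'$ witnesses $b_{tR}(H)\le|B'|\le b_{tR}(G)$.
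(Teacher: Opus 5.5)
Your upper bound is correct and matches the paper's argument. Your monotonicity step for the lower bound is also the paper's. The paper, however, writes $\gamma_{tR}(H-(B\cap E(H)))\ge\gamma_{tR}(G-B)$ without ever checking that $H-(B\cap E(H))$ is isolate-free. The step you flagged is therefore a genuine gap in both proofs. Your minimality repair cannot close it. Shrinking $B$ destroys the property $\gamma_{tR}(G-B)>\gamma_{tR}(G)$, so minimality gives no leverage. In fact the inequality $b_{tR}(H)\le b_{tR}(G)$ is false as stated, even for connected graphs in $\mathcal{B}$.

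\textbf{Counterexample.} Let $H$ be the $4$-cycle $(c,x,c',y)$ with pendant vertices $\ell_1,\ell_2,\ell_3$ at $c$ and $\ell_1',\ell_2',\ell_3'$ at $c'$. Let $G=H+\ell_1\ell_2$, so $k=1$. A leaf $z$ with support $s$ forces $f(z)+f(s)\ge 2$ in every TRDF; this is used throughout.
\begin{itemize}
\item In $G$, every TRDF has $f(c)+f(\ell_3)\ge 2$. It also has $f(N[c'])\ge 3$: if $f(c')=2$ it needs a positive neighbor, and if $f(c')=1$ all its leaves are positive. Since $f(c)=f(c')=2$, $f(x)=1$ is a TRDF of $H$, we get $\gamma_{tR}(G)=\gamma_{tR}(H)=5$.
\item In $H$, deleting a pendant edge isolates a leaf. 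Deleting one cycle edge still leaves a TRDF of weight $5$: put the $1$ on whichever of $x,y$ remains adjacent to both $c$ and $c'$. Meanwhile $H-\{cx,cy\}=K_{1,3}\cup K_{1,5}$ has $\gamma_{tR}=6$. Hence $b_{tR}(H)=2$.
\item $G-c\ell_1$ is isolate-free and contains the disjoint leaf--support pairs $\{\ell_1,\ell_2\}$, $\{\ell_3,c\}$, $\{\ell_1',c'\}$. So $\gamma_{tR}(G-c\ell_1)\ge 6$, and $b_{tR}(G)=1<b_{tR}(H)$.
\end{itemize}
The only $b_{tR}(G)$-sets are $\{c\ell_1\}$ and $\{c\ell_2\}$, and each one isolates a vertex of $H$. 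This is exactly the obstruction you described.

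So your fallback is not a reading of the lemma but a necessary extra hypothesis. With it, your argument is complete: the lower bound holds when some $b_{tR}(G)$-set $B$ has $H-(B\cap E(H))$ isolate-free. This is guaranteed, for instance, whenever $\delta(H)>b_{tR}(G)$.
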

\begin{proof}
Let $B$ be a $b_{tR}(G)$-set. Then $\gamma_{tR}(H - (B\cap E(H))) \geq \gamma_{tR}(G - B) > \gamma_{tR}(G) = \gamma_{tR}(H)$, so $b_{tR}(H) \leq |B\cap E(H)| \leq |B|$.

Let $F = E(G) \setminus E(H)$ and $B$ a $b_{tR}(H)$-set. Then $\gamma_{tR}(G - (F \cup B)) = \gamma_{tR}(H - B) > \gamma_{tR}(H) = \gamma_{tR}(G)$, so $b_{tR}(G) \leq |F \cup B| = b_{tR}(H) + k$.
\end{proof}

\begin{theorem}\label{tb1}
$b_{tR}(G) = 1$ if and only if $G$ has an edge $uv$ with $G - uv$ isolate-free and every $\gamma_{tR}(G)$-function $f = (V_0, V_1, V_2)$ satisfies:
\begin{enumerate}
\item[(i)] $G[V_1 \cup V_2] - uv$ has an isolated vertex; or
\item[(ii)] $u \in V_2$ and $v \in \mathrm{epn}(u, V_2) \cap V_0$, or vice versa.
\end{enumerate}
\end{theorem}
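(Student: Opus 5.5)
The plan is to reduce the statement to the following equivalence. For an edge $uv$ whose removal leaves no isolated vertex, we have $b_{tR}(G)=1$ exactly when some such edge satisfies $\gamma_{tR}(G-uv)>\gamma_{tR}(G)$. Removing an edge never lowers $\gamma_{tR}$: a TRDF on $G-uv$ is also a TRDF on $G$, since all domination and non-isolation conditions only get easier when the edge is added back. Hence $\gamma_{tR}(G-uv)\ge\gamma_{tR}(G)$, and $\gamma_{tR}(G-uv)>\gamma_{tR}(G)$ holds if and only if no $\gamma_{tR}(G)$-function remains a TRDF on $G-uv$. So the whole proof reduces to one lemma: a $\gamma_{tR}(G)$-function $f=(V_0,V_1,V_2)$ fails to be a TRDF on $G-uv$ if and only if (i) or (ii) holds for $f$.

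To prove the lemma I will compare the two TRDF conditions for $f$ in $G$ and in $G-uv$. The only vertices whose situation changes are $u$ and $v$.

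The totality condition fails in $G-uv$ precisely when $G[V_1\cup V_2]-uv$ has an isolated vertex, which is (i). This can only happen when $u,v\in V_1\cup V_2$ and one of them had $uv$ as its only edge inside $V_1\cup V_2$.

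The Roman condition fails in $G-uv$ precisely when some vertex of $V_0$ loses all its neighbours in $V_2$. Only $u$ or $v$ can be affected, and only when the edge $uv$ joins $V_0$ to $V_2$. Say $v\in V_0$ and $u\in V_2$. Then failure means $N_G(v)\cap V_2=\{u\}$, that is, $v\in \mathrm{epn}(u,V_2)\cap V_0$, which is (ii). The symmetric case gives the ``vice versa'' clause.

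Conversely, if neither (i) nor (ii) holds, both conditions survive and $f$ is a TRDF on $G-uv$ of weight $\gamma_{tR}(G)$.

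Combining the pieces gives both directions. If $b_{tR}(G)=1$, choose the edge $uv$ witnessing it. Every $\gamma_{tR}(G)$-function must then fail on $G-uv$, since otherwise $\gamma_{tR}(G-uv)\le\gamma_{tR}(G)$, and by the lemma it satisfies (i) or (ii). Conversely, if $uv$ has the stated property, no $\gamma_{tR}(G)$-function is a TRDF on $G-uv$. Then any TRDF on $G-uv$, which is also a TRDF on $G$, has weight $>\gamma_{tR}(G)$, so $b_{tR}(G)\le1$, and therefore $b_{tR}(G)=1$ since the empty set does not work.

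The only delicate point is the bookkeeping in the lemma: making sure that the case $u,v\in V_0$ and the case where both endpoints lie in $V_2$ cannot cause a failure. When both endpoints lie in $V_2$, neither is in $V_0$, so the Roman condition is unaffected, and totality is covered by (i). Everything else is routine.
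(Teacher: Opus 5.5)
Your proposal is correct and follows essentially the paper's argument. The paper uses the same case split on where $u,v$ lie to show that a $\gamma_{tR}(G)$-function survives on $G-uv$ unless (i) or (ii) holds, and for the converse it takes a $\gamma_{tR}(G-uv)$-function, which is a $\gamma_{tR}(G)$-function still valid on $G-uv$. You simply package this as a separate two-way lemma and make explicit the monotonicity $\gamma_{tR}(G-uv)\ge\gamma_{tR}(G)$ that the paper uses implicitly.
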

\begin{proof}
($\Rightarrow$) Let $b_{tR}(G) = 1$, so some $uv$ satisfies $\gamma_{tR}(G - uv) > \gamma_{tR}(G)$ and $G - uv$ isolate-free. Let $f = (V_0, V_1, V_2)$ be a $\gamma_{tR}(G)$-function. Suppose $G[V_1 \cup V_2] - uv$ has no isolated vertices. If $u,v \in V_1 \cup V_2$ or $u,v \in V_0 \cup V_1$, then $f$ remains a TRDF on $G - uv$, a contradiction. Assume without loss of generality $u \in V_2$, $v \in V_0$. If $v \notin \mathrm{epn}(u, V_2)$, then $v$ has neighbor $w \in V_2 \setminus \{u\}$, so $f$ is a TRDF on $G - uv$, a contradiction. Thus (ii) holds if (i) fails.

($\Leftarrow$) Assume such $uv$ exists and every $\gamma_{tR}(G)$-function satisfies (i) or (ii), with $G - uv$ isolate-free. Suppose $\gamma_{tR}(G - uv) = \gamma_{tR}(G)$. Let $g = (V^g_0, V^g_1, V^g_2)$ be a $\gamma_{tR}(G - uv)$-function; then $g$ is also $\gamma_{tR}(G)$-function, so satisfies (i) or (ii). But $G[V^g_1 \cup V^g_2] - uv$ has no isolated vertices (as $g$ dominates $G - uv$ totally). Assume without loss of generality $u \in V^g_2$, $v \in \mathrm{epn}(u, V^g_2) \cap V^g_0$. Then $v \in V^g_0$ has neighbor $w \in V^g_2 \cap N_{G - uv}(v)$, so $v \notin \mathrm{epn}(u, V^g_2)$, a contradiction. Thus $\gamma_{tR}(G - uv) > \gamma_{tR}(G)$.
\end{proof}

\begin{corollary}
If $\delta(G) \geq 2$ and $G$ has a unique $\gamma_{tR}(G)$-function, then $b_{tR}(G) = 1$.
\end{corollary}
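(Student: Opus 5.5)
The plan is to apply Theorem~\ref{tb1} directly. Let $f=(V_0,V_1,V_2)$ be the unique $\gamma_{tR}(G)$-function. Since $\delta(G)\geq 2$, the graph $G-uv$ is isolate-free for every edge $uv$, so that hypothesis of Theorem~\ref{tb1} holds automatically. Because $f$ is the only $\gamma_{tR}(G)$-function, the phrase ``every $\gamma_{tR}(G)$-function'' refers to $f$ alone. So it suffices to find one edge $uv$ for which $f$ satisfies (i) or (ii).

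The same reduction can be phrased directly. If $\gamma_{tR}(G-e)=\gamma_{tR}(G)$, then any $\gamma_{tR}(G-e)$-function is a TRDF of $G$ of minimum weight, so it equals $f$. Hence $b_{tR}(G)=1$ as soon as $f$ fails to be a TRDF of $G-e$ for some edge $e$.

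First I look for edges that break $f$. If some vertex of $G[V_1\cup V_2]$ has degree exactly one there, its unique positive edge $uv$ gives (i). If some $v\in V_0$ has exactly one neighbour $u$ in $V_2$, then $v\in\mathrm{epn}(u,V_2)\cap V_0$ and $uv$ gives (ii). In either case Theorem~\ref{tb1} yields $b_{tR}(G)=1$.

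The remaining situation is the ``robust'' one, where $f$ survives every single edge deletion:
\begin{itemize}
\item[$\bullet$] every vertex of $V_1\cup V_2$ has at least two neighbours in $V_1\cup V_2$;
\item[$\bullet$] every vertex of $V_0$ has at least two neighbours in $V_2$.
\end{itemize}
Here I would try to contradict uniqueness, or minimality, by building a second TRDF of weight at most $\omega(f)$. Natural moves to test are the following.
\begin{itemize}
\item[$\bullet$] Lower a vertex $w\in V_1$ to $0$ when $w$ has a neighbour in $V_2$. Its positive neighbours keep at least one other positive neighbour, so the weight drops, contradicting minimality unless $N(w)\cap V_2=\emptyset$.
\item[$\bullet$] Exchange the values on an edge $xy$ with $x\in V_2$ and $y\in V_0$, setting $f'(y)=2$ and $f'(x)=0$. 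Each other $V_0$-neighbour of $x$ still has a second neighbour in $V_2$, and $x$ itself is dominated by $y$. The point to verify is that $y$ receives a positive neighbour other than $x$.
\item[$\bullet$] Lower a vertex of $V_2$ to $1$ when all its $V_0$-neighbours are doubly dominated. This again lowers the weight.
\end{itemize}
Each move gives a TRDF different from $f$ of weight at most $\omega(f)$, which is impossible.

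The main obstacle is this robust case. The exchange move needs $y$ to have a positive neighbour besides $x$, and the argument must also cover the extreme case $V_0=\emptyset$. In that case $\omega(f)\geq n$, and I would appeal to Proposition~\ref{ahah1} together with $\delta(G)\geq 2$ and $G\in\mathcal{B}$. If these local modifications do not close every subcase, the fallback is to show that robustness forces $V_2$-vertices with interchangeable roles, and hence a second $\gamma_{tR}$-function.
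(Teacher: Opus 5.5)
You have the right framework: the reduction to Theorem~\ref{tb1} and your analysis of the two ``breaking'' configurations are correct. But the proposal stops short of a proof. You present the robust case as the main obstacle, leave the exchange move unverified, and fall back on a vague ``interchangeable roles'' argument. The missing observation is that your own third bullet already closes the robust case.

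In the robust case every vertex of $V_0$ has at least two neighbours in $V_2$. So for \emph{any} $x\in V_2$, the function $(V_0,V_1\cup\{x\},V_2\setminus\{x\})$ is still a TRDF: the positive set is unchanged, and each $V_0$-vertex keeps a neighbour in $V_2\setminus\{x\}$. Its weight is $\omega(f)-1$, contradicting minimality.

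What remains is only to show $V_2\neq\emptyset$. The degenerate case is really $V_2=\emptyset$, not $V_0=\emptyset$, since $V_0=\emptyset$ gives $\omega(f)=n+|V_2|\leq n$ and hence $V_2=\emptyset$. If $V_2=\emptyset$, then $f\equiv 1$ and $\gamma_{tR}(G)=n$. By Proposition~\ref{ahah1} and $\delta(G)\geq 2$, every component is then a cycle. This is excluded by $G\in\mathcal{B}$, or directly because a cycle has more than one $\gamma_{tR}$-function.

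With that fix your argument coincides with the paper's, which is shorter because it never needs your case (i). Minimality alone shows that every $v\in V_2$ has $\mathrm{epn}(v,V_2)\cap V_0\neq\emptyset$; otherwise lower $v$ to $1$ as above. Choosing $u$ in this set gives condition (ii) of Theorem~\ref{tb1} for the edge $uv$, and $G-uv$ is isolate-free since $\delta(G)\geq 2$. Uniqueness of $f$ is used only to make (ii) hold for every $\gamma_{tR}(G)$-function.

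Your exchange move also works in the robust case, since $y$ has a second $V_2$-neighbour that remains positive, but it is unnecessary.
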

\begin{proof}
Let $f=(V_0,V_1,V_2)$ be the unique $\gamma_{tR}(G)$-function. Since $G \in \mathcal{B}$, we have $V_2 \neq \emptyset$.
Assume that $v \in V_2$.
If $\mathrm{epn}(v,V_2) \cap V_0  = \emptyset$, then
the function  $g=(V_0,V_1 \cup \{v\},V_2-\{v\})$
defines a TRDF on $G$ of weight $\gamma_{tR}(G)-1$ which is a contradiction.
Hence $\mathrm{epn}(v,V_2) \cap V_0 \neq \emptyset$.
Suppose that $u \in \mathrm{epn}(v,V_2) \cap V_0 $. Since $\delta (G)\geq 2$, $G-uv$ has no isolated vertices and so the result follows from Theorem~\ref{tb1}, because $f$ is unique.
\end{proof}

\begin{proposition}
If $\{u_1, \dots, u_k\}$ ($k \geq 4$) forms a clique in $G$ and $G \setminus \{u_1, \dots, u_k\}$ is isolate-free, then
$$b_{tR}(G) \leq \sum_{i=1}^k \deg(u_i) - \frac{k(k+1)}{2}.$$
\end{proposition}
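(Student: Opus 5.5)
The idea is to exhibit an explicit edge set $E'$ of exactly the stated size whose removal leaves a graph without isolated vertices and with strictly larger total Roman domination number. Write $U=\{u_1,\dots,u_k\}$ and $G'=G\setminus U$, and let $s$ be the number of edges of $G$ joining $U$ to $V(G)\setminus U$. The counting identity is
$$\sum_{i=1}^k \deg(u_i)=s+2\binom{k}{2}=s+k(k-1),$$
so
$$\sum_{i=1}^k \deg(u_i)-\frac{k(k+1)}{2}=s+\frac{k(k-1)}{2}-k.$$
This suggests removing all $s$ edges leaving the clique, together with all clique edges except $k$ of them. Since $k\ge 4$, I will keep the $k$ edges of a Hamiltonian cycle $(u_1,u_2,\dots,u_k)$ of the clique.

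Concretely, let $E'$ consist of all edges between $U$ and $V(G)\setminus U$, together with all edges $u_iu_j$ that are not on the cycle $(u_1,\dots,u_k)$. Then $|E'|$ equals the right-hand side above. The graph $G-E'$ is the disjoint union of $G'$ and the cycle $C_k$. By hypothesis $G'$ is isolate-free, and the cycle has none, so $G-E'$ has no isolated vertices. Since $\gamma_{tR}$ is additive over components (as used in the proof of Corollary~\ref{infty}) and $\gamma_{tR}(C_k)=k$ by Proposition~\ref{ahah1}, we get
$$\gamma_{tR}(G-E')=\gamma_{tR}(G')+k.$$

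For the other side, I will bound $\gamma_{tR}(G)$ from above. Take a $\gamma_{tR}(G')$-function $f$, and extend it to $G$ by setting $g(u_1)=2$, $g(u_2)=1$, $g(u_i)=0$ for $i\ge 3$, and $g=f$ on $V(G')$. Every vertex of $G'$ is handled by $f$, since $f$ already satisfies the TRDF conditions there and adding edges does not destroy this. Each $u_i$ with $i\geq 3$ is adjacent to $u_1$, which has value $2$. The vertices $u_1$ and $u_2$ are adjacent positive vertices, so neither is isolated in the induced positive subgraph. Hence $g$ is a TRDF of $G$, and
$$\gamma_{tR}(G)\le \gamma_{tR}(G')+3<\gamma_{tR}(G')+k=\gamma_{tR}(G-E'),$$
using $k\ge4$. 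This yields $b_{tR}(G)\le |E'|$, as claimed.

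The only points needing care are the edge count and the use of Proposition~\ref{ahah1} for $C_k$ (valid for all $k\ge 3$). The strict inequality $3<k$ is exactly where $k\ge4$ enters, so I do not expect a real obstacle.
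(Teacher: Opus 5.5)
Your proof is correct and follows the paper's argument. You use the same edge set (all edges leaving the clique together with all clique edges off the Hamiltonian cycle $(u_1,\dots,u_k)$), the same extension $g(u_1)=2$, $g(u_2)=1$, $g(u_i)=0$ for $i\ge 3$, and the same count; the only difference is presentation, since you compare $\gamma_{tR}(G')+3$ with $\gamma_{tR}(G')+k$ directly via additivity over components, while the paper argues by contradiction from assuming $\gamma_{tR}(H)=\gamma_{tR}(G)$.
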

\begin{proof}
Let $H$ arise by removing edges incident to the cycle $(u_1, \dots, u_k)$ except cycle edges. Suppose $\gamma_{tR}(H) = \gamma_{tR}(G)$ with TRDF $f$ on $H$ (hence on $G$). By Proposition~\ref{ahah1}(1), $\sum_{i=1}^k f(u_i) = k$. Define $g(u_1) = 2$, $g(u_2) = 1$, $g(u_i) = 0$ ($3 \leq i \leq k$), $g(x) = f(x)$ otherwise. Then $g$ is a TRDF on $G$ with weight $< \gamma_{tR}(G)$, a contradiction. Thus $\gamma_{tR}(H) > \gamma_{tR}(G)$, and
$$b_{tR}(G) \leq \sum_{i=1}^k (\deg(u_i) - (k-1))+\frac{k(k-1)}{2}-k=\sum_{i=1}^k \deg(u_i) - \frac{k(k+1)}{2}.$$
\end{proof}

\begin{lemma}\label{Prop20}
Let $G$ ($n \geq 5$) have strong support vertex $v$ with non-leaf neighbor $a$ such that $N_G(a) \subseteq V(G) \setminus (N_G(v) \cup N_G(b))$ for all $b \in N_G(a)$. Then $b_{tR}(G) \leq n - 4$.
\end{lemma}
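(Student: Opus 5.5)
The plan is to reduce to Lemma~\ref{bt1}(1), taking $R$ to be the star induced by $v$ and its leaves. Let $r=|L(v)|\geq 2$, let $R=G[L(v)\cup\{v\}]$ (a star $S_r$ with center $v$), and let $H=G-(L(v)\cup\{v\})$. The edges joining $H$ to $R$ are exactly the $\deg(v)-r$ edges from $v$ to its non-leaf neighbours, and $va$ is one of them. If Lemma~\ref{bt1}(1) applies with $u=a$, we get $b_{tR}(G)\leq \deg(v)-r$, and it remains to bound this by $n-4$.

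\textbf{Counting step.} Since $a$ is not a leaf, it has a neighbour $b\neq v$. The hypothesis $N_G(a)\cap N_G(v)=\emptyset$ gives $b\notin N_G(v)$, and clearly $b\neq v$. Hence $v$ is non-adjacent to both itself and $b$, so $\deg(v)\leq n-2$. This gives $\deg(v)-r\leq n-4$, which is the required bound.

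\textbf{Isolate-freeness of $H$.} Any vertex of $H$ whose only neighbour in $G$ is $v$ would be a leaf of $v$, so it is not in $H$. Thus $H$ has no isolated vertices, and $\gamma_{tR}(H)$ is defined.

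\textbf{Main obstacle.} Lemma~\ref{bt1}(1) needs a $\gamma_{tR}(H)$-function $f$ with $f(a)>0$. This is where I would use the full hypothesis $N_G(a)\cap N_G(b)=\emptyset$ for all $b\in N_G(a)$: it says $a$ lies on no triangle and its neighbours share no neighbours with $a$. Start from an arbitrary $\gamma_{tR}(H)$-function $f$ with $f(a)=0$. Then $a$ has a neighbour $w$ with $f(w)=2$, and $w$ has a positive neighbour $x\neq a$. By the hypothesis, $x\notin N(a)$.

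The first thing I would try is a local exchange that keeps the weight and makes $a$ positive: move one unit from $w$ to $a$, setting $f'(w)=1$ and $f'(a)=1$. The exchange is valid only if $w$ is not needed as a $2$ for any vertex of $\mathrm{epn}(w,V_2)\cap V_0$ other than $a$. If it fails, I would instead use the triangle-free structure around $a$ to send such private neighbours to $a$. If no exchange is possible, I would fall back to a direct argument in the style of the proof of Lemma~\ref{bt1}: delete the edges from $v$ to $N(v)\setminus(L(v)\cup\{a\})$, so that $v$ is left with its leaves and $a$. Then compare $\gamma_{tR}$ of the resulting graph with $\gamma_{tR}(G)$, using $g(v)=2$, $g(L(v))=0$, $g(a)\geq 1$ to show that $G$ admits a cheaper function than $G-E'$. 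Getting this comparison right (a strict inequality rather than equality) is the step I expect to be the hardest.
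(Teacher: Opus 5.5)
Your reduction fails at exactly the step you flag, and the failure cannot be repaired within this strategy. The claim that some $\gamma_{tR}(H)$-function is positive at $a$ is false in general. Worse, the bound $b_{tR}(G)\le\deg(v)-|L(v)|$, which both your main route and your fallback are designed to produce, can itself fail under the lemma's hypotheses.

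Take the $11$ vertices $\ell_1,\ell_2,v,a,w,p_1,p_2,x,y,q_1,q_2$ with edges $v\ell_1,v\ell_2,va,aw,wp_1,wp_2,wx,wy,xy$ and $xq_i,yq_i$ for $i=1,2$. Then $N(a)=\{v,w\}$ and $vw\notin E(G)$, so $a$ lies on no triangle and the hypothesis holds.
\begin{itemize}
\item Under any TRDF, the disjoint sets $\{v,\ell_1,\ell_2,a\}$, $\{w,p_1,p_2\}$ and $\{x,y,q_1,q_2\}$ carry weight at least $3$, $2$ and $2$. Hence $\gamma_{tR}(G)=7$, attained by $v=w=x=2$, $a=1$.
\item In $H=G-\{v,\ell_1,\ell_2\}$ the vertex $a$ is a leaf of $w$. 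Weight $\gamma_{tR}(H)=4$ then forces $f(w)=2$ and $f(a)=0$. Your exchange $f'(w)=f'(a)=1$ fails because $p_1,p_2$ are private neighbours of $w$.
\item Every edge $e$ with $G-e$ isolate-free still admits a TRDF of weight $7$ on $G-e$. Use $v=w=x=2,\ \ell_1=1$ for $e\in\{va,aw\}$. Use $v=w=y=2,\ a=1$ for $e\in\{wx,xq_1,xq_2\}$. Use $v=w=x=2,\ a=1$ for $e\in\{wy,xy,yq_1,yq_2\}$.
\end{itemize}
So $b_{tR}(G)\ge 2>1=\deg(v)-|L(v)|$. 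Your fallback deletes nothing here, since $N(v)=L(v)\cup\{a\}$.

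What is missing is a second stage, which is how the paper proceeds. Let $B_1$ be the edges from $v$ to its non-leaf neighbours, and suppose no $\gamma_{tR}(G-B_1)$-function is positive on a non-leaf neighbour of $v$. Then $a$ gets $0$ and has no leaf, so it has a neighbour $b$ with value $2$. The paper also deletes the set $B_2$ of edges at $a$ and $b$, except $ab$, the pendant edges of $b$, and the edges from $b$ into $N(v)$. It argues that $\gamma_{tR}$ now increases and bounds $|B_1\cup B_2|\le(\deg(v)-|L(v)|)+(\deg(a)-2)+(\deg(b)-|N(v)\cap N(b)|-|L(b)|)$. The full hypothesis enters here, through $\deg(a)\le n-|N(v)\cup N(b)|$, which collapses this sum to $n-4$.

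In the example, $B_1\cup B_2=\{va,wx,wy\}$. In fact $\{wx,wy\}$ alone already raises $\gamma_{tR}$ to $8$, so $b_{tR}(G)=2$, consistent with the lemma. Your counting never uses $N(a)\cap N(b)=\emptyset$ for $b\neq v$. That is a symptom of having skipped the case that actually needs it.
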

\begin{proof}
Let $B_1$ be edges incident to $v$ except leaf-incident, $H_1 = G - B_1$, $R$ the component of $H_1$ containing $v$. If some $\gamma_{tR}(H_1)$-function assigns positive value to some $u \in N_G(v) \setminus L(v)$, then Lemma \ref{bt1}(1) gives $b_{tR}(G) \leq |B_1|$.

Otherwise, all such functions assign $0$ to all $u \in N_G(v) \setminus L(v)$. For any $\gamma_{tR}(H_1)$-function $f_1'$, $f_1'(a) = 0$, $a$ has no leaf, so some $b \in N_{H_1}(a)$ has $f_1'(b) = 2$.

Let $B_2$ be edges incident to $a,b$ except pendant edges of $b$ and $bu$ ($u \in N_G(v)$), $H_2 = G - (B_1 \cup B_2)$. If some $\gamma_{tR}(H_2)$-function assigns positive value to $u \in N_G(v) \setminus L(v)$, then $b_{tR}(G) \leq |B_1 \cup B_2|$.

Otherwise, for any $\gamma_{tR}(H_2)$-function $f_2'$, $f_2'(u) = 0$ for all $u \in N_G(v) \setminus L(v)$, $N_{H_2}(a) = \{b\}$, $f_2'(a) = 0$ imply $f_2'(b) = 2$. For totality of vertices $b$ and $v$, leaves $c \in L(b)$, $d \in L(v)$ exist with $f_2'(c) = f_2'(d) = 1$. Define $g(v) = 2$, $g(x) = 0$ ($x \in L(v) \cup L(b)$), $g(a) = 1$, $g(x) = f_2'(x)$ otherwise: $g$ is a TRDF on $G$ with weight $< \gamma_{tR}(H_2)$, so $b_{tR}(G) \leq |B_1 \cup B_2|$. 

Thus $b_{tR}(G) \leq |B_1 \cup B_2| \leq (\deg(v) - |L(v)|) + (\deg(a) - 2) + (\deg(b) - |N_G(v) \cap N_G(b)| - |L(b)|)$. By assumptions ($|L(v)| \geq 2$, $|L(b)| \geq 0$, $N_G(a) \subseteq V(G) \setminus (N_G(v) \cup N_G(b))$),

\begin{align*}
b_{tR}(G) &\leq \deg(v)-2+ (n-|N_G(v)\cup N_G(b)|)-2+\deg(b)-|N_G(v) \cap N_G(b)| \\
&\leq \deg(v)-4+ n-\deg(v)-\deg(b)+|N_G(v) \cap N_G(b)|+\deg(b)-|N_G(v) \cap N_G(b)|\\
& =n-4.
\end{align*}
\end{proof}

The following is a straightforward consequence of Lemma \ref{Prop20}.
\begin{corollary}
If $G$ ($n \geq 5$) has a strong support vertex $v$ and one of:
\begin{enumerate}
\item A non-leaf neighbor for $v$ in no triangle;
\item $\mathrm{girth}(G) \geq 4$;
\item $G$ a tree,
\end{enumerate}
then $b_{tR}(G) \leq n - 4$.
\end{corollary}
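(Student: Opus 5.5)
The plan is to verify, in each of the three cases, the hypothesis of Lemma~\ref{Prop20}: that the strong support vertex $v$ has a non-leaf neighbor $a$ with $N_G(a)\subseteq V(G)\setminus (N_G(v)\cup N_G(b))$ for every $b\in N_G(a)$. Once this holds, Lemma~\ref{Prop20} gives $b_{tR}(G)\leq n-4$ immediately. Since $n\geq 5$ is part of the hypothesis, only the neighborhood condition needs checking.

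\emph{Case (1).} Let $a$ be a non-leaf neighbor of $v$ lying in no triangle, and let $w\in N_G(a)$.
\begin{itemize}
\item If $w\in N_G(v)$, then $v,a,w$ form a triangle through $a$, which is impossible.
\item If $w\in N_G(b)$ for some $b\in N_G(a)$, then $a,b,w$ form a triangle through $a$, which is again impossible.
\end{itemize}
Here $w\neq b$, since $G$ has no loops. Hence $N_G(a)\cap (N_G(v)\cup N_G(b))=\emptyset$, which is exactly the condition required in Lemma~\ref{Prop20}.

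\emph{Cases (2) and (3).} These reduce to Case~(1).
\begin{itemize}
\item If $\mathrm{girth}(G)\geq 4$, then $G$ has no triangles at all, so every edge avoids triangles.
\item A tree has infinite girth, so Case~(3) is a special case of Case~(2).
\end{itemize}
It remains to ensure that $v$ has at least one non-leaf neighbor. If every neighbor of $v$ were a leaf, the component containing $v$ would be a star, and stars are excluded by the standing assumption $G\in\mathcal{B}$ together with Corollary~\ref{infty}. The same argument shows that in Case~(1) $a$ genuinely exists and is not a leaf.

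The only delicate point is this existence argument: checking that the standing assumption $G\in\mathcal{B}$ really rules out the star component, so that Lemma~\ref{Prop20} is applied to a legitimate non-leaf neighbor $a$. The neighborhood verification itself is routine.
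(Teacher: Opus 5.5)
Your proposal is correct and takes the same route as the paper, which treats the corollary as an immediate consequence of Lemma~\ref{Prop20}; you make this explicit by observing that the lemma's neighborhood condition amounts to $a$ lying in no triangle, and this holds in all three cases. The one point the paper leaves implicit is the existence of a non-leaf neighbor of $v$ in Cases (2) and (3), and you handle it via the standing assumption. For connected $G$ this follows from $b_{tR}(G)<\infty$ alone. For disconnected $G$, however, it relies on the paper's phrasing that no component of a graph in $\mathcal{B}$ appears in Corollary~\ref{infty}, which is stronger than finiteness of $b_{tR}$.
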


\begin{proposition}\label{Pro21}
Let $G$ (connected with $n$ vertices) have cycle $C = (v_1, \dots, v_k)$ ($k \geq 5$) with $r$ chords, $G - V(C)$ isolate-free, and $N_G(v_i) \cap N_G(v_j) \subseteq V(C)$ for $i \neq j$. Then $$b_{tR}(G) \leq r + n - k - 1.$$
\end{proposition}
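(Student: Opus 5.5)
The plan mirrors the clique proposition and Lemma~\ref{bt1}(1). We separate the cycle $C$ from the rest of $G$ by deleting a suitable edge set $B$, argue that $\gamma_{tR}(G-B)>\gamma_{tR}(G)$, and then count $|B|$.

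\textbf{Choice of $B$.} Take $B$ to consist of all $r$ chords of $C$ together with all edges joining $V(C)$ to $V(G)\setminus V(C)$, except one edge $uv_1$ with $u\notin V(C)$. Such a $u$ exists because $G$ is connected and, we may assume, $G\neq C$.
\begin{itemize}
\item Since $G-V(C)$ is isolate-free and $C$ remains a cycle, $G-B$ has no isolated vertices.
\item The hypothesis $N_G(v_i)\cap N_G(v_j)\subseteq V(C)$ says that every vertex outside $C$ has at most one neighbour on $C$. So the edges between $V(C)$ and $V(G)\setminus V(C)$ are distinct for distinct outside vertices, and there are at most $n-k$ of them.
\end{itemize}
Hence $|B|\le r+(n-k)-1$, which is exactly the claimed bound.

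\textbf{Strict increase.} Let $H=G-V(C)$. Then $G-B$ is $H\cup C_k$ joined by the single edge $uv_1$, plus no chords. We would show directly that $\gamma_{tR}(G-B)\ge \gamma_{tR}(H)+k$, and that $\gamma_{tR}(G)\le \gamma_{tR}(H)+k-1$ in the relevant case. For the upper bound we mimic Lemma~\ref{bt1}(1) with $R=C_k$: take a $\gamma_{tR}(H)$-function $f$, set $g_3(v_1)=2$, $g_3=0$ on the two cycle-neighbours of $v_1$, and $g_3=1$ on the remaining $k-3$ cycle vertices. This has weight $f(V(H))+k-1$ on $G$. Here $k\ge5$ guarantees that the $1$'s on the cycle form a path on at least two vertices, so they are not isolated. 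The vertex $v_1$ also needs a positive neighbour, so we need a $\gamma_{tR}(H)$-function with $f(u)>0$ for some outside neighbour $u$ of $v_1$ (or of some $v_i$). That condition is exactly what Lemma~\ref{bt1}(1) requires.

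\textbf{Main obstacle.} The difficulty is the case where every $\gamma_{tR}(H)$-function vanishes on all outside neighbours of the cycle. In that case we would instead choose the retained edge $uv_1$ cleverly, or retain none if that is allowed, and argue via the weight on $C$ in $G-B$. A cycle attached by a single pendant-type edge still needs weight $k$ on $C$ unless $u$ contributes, in the spirit of Proposition~\ref{ahah1}(1). Symmetrically, we would show that in $G$ one can save by reassigning $f(u)$ from $0$ to $1$ and using $g_3$ on $C$. The weight is then $f(V(H))+1+(k-1)$, which does not beat $\gamma_{tR}(H)+k$. So this naive plan fails, and I would handle the case as in Lemma~\ref{Prop20}: iterate by deleting further edges around a $V_2$-neighbour $b$ of $u$. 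The bound $n-k-1$ has slack from outside vertices not adjacent to $C$, and that slack should absorb these extra deletions. The lower bound for $G-B$ uses the fact that each $C$-vertex in $G-B$ other than $v_1$ has degree $2$ with both neighbours on $C$, which forces total weight $k$ on the cycle.
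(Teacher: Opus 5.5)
\textbf{There is a genuine gap, and it is in the step you treat as routine, not only in the case you flag as the main obstacle.}

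\emph{Why the good case fails.} The function $g_3$ you use to bound $\gamma_{tR}(G)$ uses only edges of $H$, edges of $C$, and the attachment edge $uv_1$. If the retained edge is that one, all of these survive in $G-B$. So whenever the retained endpoint $u$ is positive under some $\gamma_{tR}(H)$-function, $g_3$ is already a TRDF of $G-B$, and $\gamma_{tR}(G-B)\le\gamma_{tR}(H)+k-1$. Your claimed lower bound $\gamma_{tR}(G-B)\ge\gamma_{tR}(H)+k$ is then false. The justification that the degree-$2$ cycle vertices ``force weight $k$ on $C$'' overlooks that $v_1$ may carry a $2$ whose positive neighbour is $u$.

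\emph{A counterexample to the choice of $B$.} Take $C=(v_1,\dots,v_5)$ plus two vertices $a,b$ with edges $v_1a$, $ab$, $bv_2$.
\begin{itemize}
\item All hypotheses hold with $k=5$, $r=0$, $n=7$, so the claimed bound is $1$.
\item One checks $\gamma(G)=3$, so $\gamma_{tR}(G)=6$. This value is attained by $v_1,v_2\mapsto 2$, $v_3,v_4\mapsto 1$.
\item Your $B$ is $\{v_2b\}$, or symmetrically $\{v_1a\}$. On $G-B$, the assignment $v_1\mapsto 2$, $a,b,v_3,v_4\mapsto 1$, $v_2,v_5\mapsto 0$ is exactly your $g_3$. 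It is a TRDF of weight $6$, so $\gamma_{tR}(G-B)=\gamma_{tR}(G)$.
\item This is your ``good'' case: the only $\gamma_{tR}(K_2)$-function is positive on both $a$ and $b$, so no choice of retained edge helps.
\item Here $b_{tR}(G)=1$ is witnessed instead by the cycle edge $v_1v_2$, whose deletion leaves $C_7$.
\end{itemize}

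\emph{How the paper proceeds.} It deletes all chords and all attachment edges, $B_1$ with $|B_1|=r+|A|$, where $A$ is the set of outside neighbours of $C$. Then $G-B_1=H\cup C_k$ and $\gamma_{tR}(G-B_1)=\gamma_{tR}(H)+k$ exactly. Lemma~\ref{bt1}(1) gives $\gamma_{tR}(G)\le\gamma_{tR}(H)+k-1$ as soon as some optimal function is positive on $A$.

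For the case you leave open, the paper argues as follows.
\begin{itemize}
\item Take $a\in A$ and a neighbour $b$ of $a$ with value $2$; necessarily $b\notin A\cup V(C)$.
\item Delete $B_2=\{bx: x\notin A\}$.
\item Suppose $\gamma_{tR}$ did not increase. Then an optimal function on $G-(B_1\cup B_2)$ is optimal on $G-B_1$, hence vanishes on $A$.
\item $A$ contains all remaining neighbours of $b$, so $b$ is left undominated or isolated among the positive vertices, a contradiction.
\item The $-1$ in the bound comes from $A\cup N_G(b)\subseteq V(G)\setminus(V(C)\cup\{b\})$.
\end{itemize}
Your pointer to ``iterating around a $V_2$-neighbour $b$'' is this idea. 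However, the proposal supplies neither the edge set, nor the strict-increase argument, nor the count.

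Note also that the paper's first case, as written, only yields $r+|A|$, which is $2$ in the example above. So the extra saving you were after is a real issue, but retaining an attachment edge is not a way to obtain it.
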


\begin{proof}
Let $A = (\bigcup_{i=1}^k N_G(v_i)) \setminus V(C)$, $B_1$ edges incident to $C$ except $C$-edges. By Proposition \ref{ahah1}, $\gamma_{tR}(G - B_1) = k + \gamma_{tR}(G - V(C))$. Let $f$ be a $\gamma_{tR}(G - B_1)$-function. If $f(u) > 0$ for some $u \in A$, then Lemma \ref{bt1}(1) gives $b_{tR}(G) \leq |B_1|$.

Otherwise, no such assignment. For $a \in A$, $f(a) = 0$ implies there exists $b \in N_{G - B_1}(a)$ with $f(b) = 2$. Let $B_2 = \{bx \mid x \notin A\}$. Then $B_1 \cap B_2 = \emptyset$, since $f(b)=2$.Suppose $\gamma_{tR}(G - (B_1 \cup B_2)) = \gamma_{tR}(G)$. Then since $\gamma_{tR}(G-(B_1\cup B_2))\geq \gamma_{tR}(G-B_1)\geq \gamma_{tR}(G)$,  any $\gamma_{tR}(G-(B_1\cup B_2))$-function $g=(V_0, V_1, V_2)$ is also a $\gamma_{tR}(G-B_1)$-function. Thus $g(u)=0$ for all $u\in A$ hence for neighbors of $b$ in $G-(B_1\cup B_2)$, making $b$ isolate in $V_1 \cup V_2$, a contradiction. Thus $\gamma_{tR}(G - (B_1 \cup B_2)) > \gamma_{tR}(G)$.

Since $N_G(v_i) \cap N_G(v_j) \subseteq V(C)$, $|B_1| = r + |A|$. Also $A \cup N_G(b) \subseteq V(G) \setminus (V(C) \cup \{b\})$, so
$$b_{tR}(G) \leq |B_1| + |B_2| \leq r + |A| + (\deg(b) - |N_G(b) \cap A|) \leq r + |A \cup N_G(b)| \leq r + n - k - 1.$$
\end{proof}

The following corollary follows from Proposition \ref{Pro21} and definition of girth of a graph.
\begin{corollary}
If $G$ is a connected graph with $\mathrm{girth}(G) \geq 5$ which has girth-cycle $C$ with $G - V(C)$ isolate-free, then 
$$b_{tR}(G) \leq n - \mathrm{girth}(G) - 1.$$
\end{corollary}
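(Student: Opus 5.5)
This follows from Proposition~\ref{Pro21} by taking for $C$ a girth-cycle, i.e.\ a cycle $C=(v_1,\dots,v_k)$ with $k=\mathrm{girth}(G)\geq 5$. What has to be checked is that $C$ satisfies every hypothesis of Proposition~\ref{Pro21}, and that the number $r$ of chords of $C$ is zero.

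The hypotheses are checked as follows.
\begin{itemize}
\item $G$ is connected and $G-V(C)$ is isolate-free by assumption, and $k\geq 5$ because the girth is at least $5$.
\item $C$ has no chords, so $r=0$. Suppose $v_iv_j$ were a chord. It splits $C$ into two cycles of lengths $a+1$ and $b+1$ with $a+b=k$ and $a,b\geq 2$. Hence one of them has length at most $\lfloor k/2\rfloor+1<k$ for $k\geq 5$ (indeed for $k\geq 4$), contradicting minimality of $C$.
\item $N_G(v_i)\cap N_G(v_j)\subseteq V(C)$ for $i\neq j$. Suppose some $w\notin V(C)$ were adjacent to both $v_i$ and $v_j$. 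Let $\ell=d_C(v_i,v_j)\leq \lfloor k/2\rfloor$. The path $v_i w v_j$ together with the shorter $v_i$--$v_j$ arc of $C$ forms a cycle of length $\ell+2$. This cycle is genuine: its vertices are distinct since $w\notin V(C)$, and if $\ell=1$ it is a triangle. Its length $\ell+2\leq \lfloor k/2\rfloor+2<k$ once $k\geq 5$ (for $k=5$: $2+2=4<5$). This contradicts the girth.
\end{itemize}

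With $r=0$, Proposition~\ref{Pro21} gives
\[
b_{tR}(G)\leq n-k-1=n-\mathrm{girth}(G)-1.
\]

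The only step needing care is the common-neighbour condition, where the inequality $\lfloor k/2\rfloor+2<k$ uses $k\geq 5$. This is exactly why the girth hypothesis is $\geq 5$ rather than $\geq 4$. Once $C$ is fixed, everything else is a direct application of Proposition~\ref{Pro21}.
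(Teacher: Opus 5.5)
Your proof is correct and takes essentially the paper's approach: the paper only says the corollary follows from Proposition~\ref{Pro21} and the definition of girth, and you have spelled out the two girth checks this needs, namely that a girth-cycle has no chords (so $r=0$) and that for $k\geq 5$ no vertex outside $C$ has two neighbours on $C$. Applying Proposition~\ref{Pro21} with $r=0$ then gives $b_{tR}(G)\leq n-\mathrm{girth}(G)-1$, exactly as in the paper.
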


\begin{proposition}
Let $G$ be a connected graph which admits minimum $k$ edges whose removal yields components $G_1, G_2$ with $\delta(G_1) \geq 2$, $\delta(G_2) \geq 1$. Then 
$$b_{tR}(G) \leq 3 \Delta(G)+k-4.$$
\end{proposition}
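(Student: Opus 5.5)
Let $E'$ be the set of $k$ edges whose removal disconnects $G$ into $G_1$ and $G_2$. The idea is to remove $E'$ first and then destroy all minimum total Roman dominating functions by cutting edges around a single vertex of $G_1$. Since $\delta(G_1)\geq 2$, $G_1$ can absorb a few more deletions around one vertex without creating isolated vertices. Once $E'$ is gone, $G-E'=G_1\cup G_2$ and $\gamma_{tR}(G-E')=\gamma_{tR}(G_1)+\gamma_{tR}(G_2)$. If this already exceeds $\gamma_{tR}(G)$, then $b_{tR}(G)\leq k$ and we are done. Otherwise $\gamma_{tR}(G-E')=\gamma_{tR}(G)$, and Lemma~\ref{sub} (with $H=G-E'$) gives
\[
b_{tR}(G)\leq b_{tR}(G_1\cup G_2)+k .
\]
It then remains to show $b_{tR}(G_1\cup G_2)\leq 3\Delta(G)-4$. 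For this it suffices to exhibit an edge set $B$ of size at most $3\Delta(G)-4$ inside $G_1$ whose removal leaves no isolated vertex and strictly increases $\gamma_{tR}(G_1)$. Since $\gamma_{tR}$ is additive over components, increasing $\gamma_{tR}(G_1)$ increases the total.

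\textbf{Constructing $B$ in $G_1$.} Take a path $x\,y\,z$ in $G_1$ with $\deg_{G_1}(y)\geq 2$; such a path exists because $\delta(G_1)\geq 2$. The target is a subgraph in which some vertex is forced into a rigid configuration, for instance a pendant path $P_3$ or a short path component. Proposition~\ref{ahah1} and Lemma~\ref{bt1}(1) control such configurations: whenever a $\gamma_{tR}$-function of the remainder gives positive weight at the attachment vertex, rejoining the edges strictly lowers the weight. This mirrors the proofs of Lemma~\ref{Prop20} and Proposition~\ref{Pro21}.

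Concretely, I would delete all edges at $x$ except $xy$, all edges at $z$ except $yz$, and all edges at $y$ except $xy$ and $yz$. This isolates the path $x\,y\,z$ as a $P_3$ component and costs at most $(\Delta-1)+(\Delta-1)+(\Delta-2)=3\Delta-4$ edges. Note that if $xz$ is an edge it gets removed, and it is counted once. The remaining part $G_1-\{x,y,z\}$ may contain isolated vertices. To handle this, I would choose $y$, and $x,z\in N(y)$, so as to avoid that, for example by taking $y$ on a cycle or using a block/ear structure. Failing that, I would keep, for each vertex that would become isolated, one of its edges to $\{x,y,z\}$ and adjust the count accordingly.

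\textbf{Showing the weight increases.} Let $f$ be a $\gamma_{tR}$-function of $G_1$, and compare it with the $P_3$ component, which has total Roman weight $3$. I would split into cases on $f(\{x,y,z\})$. If $f(\{x,y,z\})\leq 2$, the lost edges force extra weight: the $P_3$ needs weight $3$ and the other vertices lose domination. If $f(\{x,y,z\})\geq 3$, one then argues, as in Lemma~\ref{bt1}(1), that $G_1$ admits a cheaper TRDF unless the removal increases $\gamma_{tR}$. If instead $\gamma_{tR}$ is unchanged, then every minimum function of the new graph is a minimum function of $G_1$, and Theorem~\ref{tb1}-style reasoning yields a contradiction.

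\textbf{Main obstacle.} The hard step is guaranteeing a strict increase while keeping every vertex non-isolated within the budget $3\Delta-4$. The isolated-vertex condition in particular requires choosing $x,y,z$ carefully. As a fallback I would follow the two-stage scheme of Lemma~\ref{Prop20}: first cut around $y$, and if some minimum function still survives, cut around the vertex $b$ carrying value $2$. That also totals at most $3\Delta-4$ edges.
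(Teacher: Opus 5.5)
\textbf{There is a genuine gap.} After deleting the cut $E'$, you need a set $B\subseteq E(G_1)$ with $|B|\le 3\Delta(G)-4$, $G_1-B$ isolate-free and $\gamma_{tR}(G_1-B)>\gamma_{tR}(G_1)$. Such a $B$ need not exist.

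If $G_1$ is a cycle, then $b_{tR}(G_1)=\infty$ by Corollary~\ref{infty}: any isolate-free edge deletion leaves a union of paths, and $\gamma_{tR}$ stays $|V(G_1)|$. In particular your $P_3$ cut turns $C_n$ into $P_3\cup P_{n-3}$, which has the same weight $n$.

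Your first branch does not always catch this case. Let $G_1=C_5=(a_1,\dots,a_5)$ and let $G_2=S_{2,2}$ with centers $x,y$, $L(x)=\{\ell,\ell'\}$, $L(y)=\{m,m'\}$, joined by the single edge $a_1\ell$, so $k=1$. For any TRDF $f$ of $G$ we have $f(\{y,m,m'\})\ge 2$ and $f(\{x,\ell'\})\ge 2$. The cycle carries weight at least $5$ if $f(\ell)=0$, and at least $4$ otherwise. Hence $\gamma_{tR}(G)=9=\gamma_{tR}(G_1)+\gamma_{tR}(G_2)$. So deleting $E'$ does not raise $\gamma_{tR}$, and no edge set inside $G_1$ does either. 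The bound survives here only because deleting $xy$ inside $G_2$ raises $\gamma_{tR}$ to $10$, and your plan never looks there.

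There are further problems even when $G_1$ is not a cycle:
\begin{enumerate}
\item The case $f(\{x,y,z\})\ge 3$ is exactly where a minimum function can restrict to TRDFs of the $P_3$ and of $G_1-\{x,y,z\}$ separately. Nothing then forces an increase.
\item Lemma~\ref{bt1}(1) applies only when the center of the star is joined to a vertex that is positive under some minimum function of the remainder. You neither have this nor check it.
\item The isolated-vertex problem and its effect on the edge count are left open.
\item The $3\Delta-4$ budget of your two-stage fallback is asserted, not verified.
\end{enumerate}

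\textbf{The missing idea} is to keep one cut edge in play and exploit both sides at once. The paper fixes $uv$ in the cut $F$, with $u_1,u_2\in N_{G_1}(u)$ and $v_1\in N_{G_2}(v)$, and assumes $b_{tR}(G)>3\Delta(G)+k-4$. It then considers two deletions:
\begin{enumerate}
\item $F\cup E_1$, which splits off the piece on $\{u,u_1,u_2\}\cup A_1$. This piece is dominated by $u$, so it has weight $3$; the bound $|E_1|\le 3\Delta-4$ is where the budget comes from.
\item $F\cup E_2$, which splits off the piece on $\{v,v_1\}\cup L(v)\cup L(v_1)\cup A_2$, of weight $\gamma_1$.
\end{enumerate}
Both sets are too small to raise $\gamma_{tR}$. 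Adding the two resulting equalities and using $\gamma_{tR}(G)=\gamma_{tR}(G_1)+\gamma_{tR}(G_2)$ shows that $\gamma_{tR}(G)$ equals the $\gamma_{tR}$-values of the two remainders plus $\gamma_1+3$. On the other hand, a single TRDF on the union of the two local pieces, with $u$ and $v$ giving each other totality through $uv$, costs $\gamma_2\le\gamma_1+2$. This is a contradiction. The saving comes precisely from the edge $uv$, which your reduction discards in its first step.
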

\begin{proof}
Let $F = \{e_1, \dots, e_k\}$ separate as described, $e_1 = uv$ ($u$ in $G_1$, $v$ in $G_2$). Since $\delta(G_1) \geq 2, \delta(G_2)\geq 1$, $\Delta (G)\geq 2$ and
there exist $u_1,u_2 \in N_{G_1}(u)$ and $v_1 \in N_{G_2}(v)$. Define $E_1$ as edges incident to $u,u_1,u_2$ in $G_1$ except $uu_1, uu_2, u_1u_2, ux$ ($N_{G_1}(x) = \{u,u_1,u_2\}$); $E_2$ incident to $v,v_1$ in $G_2$ except $vv_1, vy$ ($N(y) = \{v,v_1\}$), pendant of $v,v_1$.  Furthermore assume
$$A_1=\{x\in V(G) \mid N_{G_1}(x)=\{u,u_1, u_2\}\}, A_2=\{y \in V(G) \mid  N_{G_2}(y)=\{v,v_1\}\},$$
and
$$\gamma_1=\gamma_{tR}(G_2[\{v,v_1\}\cup L(v)\cup L(v_1)\cup A_2]), \gamma_2=\gamma_{tR}(G[\{u,u_1, u_2,v,v_1\}\cup L(v)\cup L(v_1)\cup A_1 \cup A_2]).$$

If $b_{tR}(G)\leq k$, then the proof has been done. Assume $b_{tR}(G)>k$, so
$\gamma_{tR}(G)=\gamma_{tR}(G_1)+\gamma_{tR}(G_2)$. Suppose to the contrary that $b_{tR}(G) > 3 \Delta(G)+k-4$. Then
$$|F \cup E_1| \leq \deg_{G_1} (u)+ \deg_{G_1} (u_1)+ \deg_{G_1} (u_2)+k-4 \leq 3 \Delta(G)+k-4< b_{tR}(G)$$
and
$$|F \cup E_2| \leq \deg_{G_2} (v)+ \deg_{G_2} (v_1)+k-2 \leq 2 \Delta(G)+k-2= (3\Delta (G) +k-4)+(2-\Delta(G)) < b_{tR}(G),$$
yielding
\begin{align*}
 \gamma_{tR}(G)&=\gamma_{tR}(G_1-(\{u,u_1,u_2\}\cup A_1))+3+\gamma_{tR}(G_2),\\
 \gamma_{tR}(G)&=\gamma_{tR}(G_1)+\gamma_{tR}(G_2-E_2).
\end{align*}
Adding gives
$$2\gamma_{tR}(G)=\gamma_{tR}(G_1-(\{u,u_1,u_2\}\cup A_1))+\gamma_{tR}(G_2-E_2)+3+\gamma_{tR}(G).$$
Therefore
$$\gamma_{tR}(G)=\gamma_{tR}(G_1-(\{u,u_1,u_2\}\cup A_1))+\gamma_{tR}(G_2-(\{v,v_1\}\cup L(v)\cup L(v_1)\cup A_2))+\gamma_1+3,$$
while obviously
$$\gamma_{tR}(G)\leq \gamma_{tR}(G_1-(\{u,u_1,u_2\}\cup A_1))+\gamma_{tR}(G_2-(\{v,v_1\}\cup L(v)\cup L(v_1)\cup A_2))+\gamma_2.$$
Thus $\gamma_1+3 \leq \gamma_2$. Cases contradict:
\begin{itemize}
  \item[$\bullet$] $A_2=L(v)=L(v_1)=\emptyset$: $\gamma_1=2 ,\gamma_2=4$.
  \item[$\bullet$] $A_2=L(v_1)=\emptyset, L(v)\neq \emptyset$ or $A_2\neq \emptyset, L(v_1)=\emptyset$: $\gamma_1=3 ,\gamma_2=4$.
  \item[$\bullet$] $L(v_1)\neq \emptyset, L(v)=\emptyset$: $\gamma_1=3 ,\gamma_2=5$.
  \item[$\bullet$] $L(v)\neq \emptyset, L(v_1)\neq \emptyset$: $\gamma_1=4 ,\gamma_2=6$.
\end{itemize}
\end{proof}

{\bf Acknowledgement.}
The authors are deeply grateful to the
referees for carefully reading of the manuscript and helpful
suggestions.




\end{document}